\documentclass[11pt, reqno]{amsart}

\usepackage[left=1in,right=1in,top=1in,bottom=1in]{geometry}
\usepackage{amsmath,amssymb,amsthm,mathtools,mathrsfs}
\usepackage[colorlinks=true,linkcolor=blue,citecolor=blue,urlcolor=blue]{hyperref}
\numberwithin{equation}{section}
\newtheorem{theorem}{Theorem}[section]
\newtheorem{proposition}[theorem]{Proposition}
\newtheorem{lemma}[theorem]{Lemma}
\newtheorem{corollary}[theorem]{Corollary}

\theoremstyle{remark}
\newtheorem{rem}[theorem]{Remark}

\newcommand{\R}{\mathbb R}
\newcommand{\Szero}{\mathbb S_0}
\newcommand{\tr}{\operatorname{tr}}
\newcommand{\diag}{\operatorname{diag}}
\newcommand{\ess}{\mathrm{ess}}

\title[Sharp instability of planar liquid-crystal interfaces]
{Sharp instability of planar isotropic--nematic interfaces\\
in the Landau--de Gennes model}

\author{Wei Wang}
\address{School of Mathematical Sciences, Peking University, Beijing 100871, China}
\email{2201110024@stu.pku.edu.cn,\,\,wwmath166@outlook.com}

\author{Qin Wu}
\address{School of Mathematical Sciences, Peking University, Beijing 100871, China}
\email{m15985481267@163.com}

\subjclass[2020]{}
\keywords{}
\date{}

\begin{document}

\begin{abstract}
We study the stability of one-dimensional planar isotropic--nematic
interfaces in the Landau--de Gennes model with anisotropic elastic constant
$L$.  Earlier work proved the instability for $L<0$ only under an extra
condition.  We remove this condition and prove
that any non-negative minimizer of the reduced energy within the diagonal
planar class is unstable under general one-dimensional perturbations
throughout $-\frac{3}{2}<L<0$. This result
shows that $L=0$ is the sharp endpoint of the instability range from the
negative-$L$ side: the critical operator is non-negative at $L=0$, while
instability holds over the full negative-$L$ range of the reduced problem.
We also define the optimal stability index and correct a factor-of-two
normalization inconsistency in a previously stated interface profile.
\end{abstract}

\maketitle

\section{Introduction}

An isotropic--nematic interface separates a disordered phase from a phase
with orientational order.  Both the strength and the direction of this order
may vary across the transition layer, and states with three distinct
eigenvalues, called biaxial states, may occur there.  A
director field alone cannot represent all of these effects.  We therefore
use the Landau--de Gennes order parameter $\mathbf Q$, a symmetric traceless tensor
\cite{StephenStraley1974,Wang2021}.  This model respects the head--tail
symmetry of nematic molecules and permits changes in the degree of order.
We write the state space as
\[
\Szero:=\{\mathbf Q\in\R^{3\times3}\colon
\mathbf Q=\mathbf Q^{\mathrm{T}},\ \tr\mathbf Q=0\}.
\]
Other descriptions are useful at different scales. The Onsager theory uses an
orientational probability density \cite{Onsager1949}. The Oseen--Frank model
uses a director field \cite{Frank1958,Oseen1933}, and Ericksen's theory adds a
scalar order parameter \cite{Ericksen1991}.  The tensor model is well suited
to the interface stability problem considered here.

The Landau--de Gennes energy contains a bulk term and an elastic term.  The
bulk term selects the preferred phases, while the elastic term penalizes
spatial variation.  A standard form is
\begin{align*}
\mathcal F(\mathbf Q,\nabla \mathbf Q):=&\int_{\Omega}\underbrace{\frac{1}{2}\left(L_1\partial_k\mathbf Q_{ij}\partial_k\mathbf Q_{ij}+L_2\partial_j\mathbf Q_{ij}\partial_k\mathbf Q_{ik}+L_3\partial_k\mathbf Q_{ij}\partial_j\mathbf Q_{ik}+L_4\mathbf Q_{ij}\partial_i\mathbf Q_{kl}\partial_j\mathbf Q_{kl}\right)}_{f_E(\mathbf Q,\nabla\mathbf Q):\text{elastic energy}}\mathrm{d} x\\
&+\int_{\Omega}\underbrace{\frac{a}{2}\tr(\mathbf Q^2)-\frac{b}{3}\tr(\mathbf Q^3)+\frac{c}{4}(\tr(\mathbf Q^2))^2}_{f_B^{a,b,c}(\mathbf Q):\text{bulk energy}}\mathrm{d} x,
\end{align*}
Here, $\Omega\subset\R^3$ is open and $\mathbf Q=(\mathbf Q_{ij})$ is a symmetric traceless
$3\times 3$ matrix. The coefficients $a,b,c$ specify the bulk potential and
depend on the material and the temperature.  The constants $L_i$ describe the
elastic response.  In the small-elastic-length regime, the scaled energy \cite{MZ10} is
\[
\mathcal F^\varepsilon(\mathbf Q,\nabla \mathbf Q)=\int_\Omega f_E(\mathbf Q,\nabla\mathbf Q)\mathrm{d} x+\frac{1}{\varepsilon^2}\int_\Omega f_B^{a,b,c}(\mathbf Q)\mathrm{d} x,
\]
where $0<\varepsilon\ll1$. For this regime, see also related works \cite{Can15,Can17,FWW25,FWW26,GZ23,Nguyen2010}. The $L_4$-term may make the
energy unbounded from below \cite{Ball2010}.  We therefore set $L_4=0$.
Integration by parts also gives
\[
\begin{aligned}
&\int_\Omega\left(L_2\partial_j\mathbf Q_{ij}\partial_k\mathbf Q_{ik}+L_3\partial_k\mathbf Q_{ij}\partial_j\mathbf Q_{ik}\right)\mathrm{d} x\\
&\quad=\int_\Omega(L_2+L_3)\partial_j\mathbf Q_{ij}\partial_k\mathbf Q_{ik}\mathrm{d} x+\text{boundary terms}.
\end{aligned}
\]
Only the sum $L_2+L_3$ affects the interior equations.  We absorb $L_3$ into
$L_2$ and take $L_3=0$.  Set $\mathbf Q_i=(\mathbf Q_{i1},\mathbf Q_{i2},\mathbf Q_{i3})$.  The curl--div
identity yields
\[
\begin{aligned}
&\int_\Omega\left(L_1\partial_k\mathbf Q_{ij}\partial_k\mathbf Q_{ij}+L_2\partial_j\mathbf Q_{ij}\partial_k\mathbf Q_{ik}\right)\mathrm{d} x
=\sum_{i=1}^3\int_\Omega\left(L_1|\nabla \mathbf Q_i|^2+L_2|\nabla\cdot \mathbf Q_i|^2\right)\mathrm{d} x\\
&\quad=\sum_{i=1}^3\int_\Omega\left(L_1|\nabla\times \mathbf Q_i|^2+(L_1+L_2)|\nabla\cdot \mathbf Q_i|^2\right)\mathrm{d} x+\text{boundary terms}\\
&\quad\geq\min\{L_1,L_1+L_2\}\int_\Omega|\nabla \mathbf Q|^2\mathrm{d} x+\text{boundary terms}.
\end{aligned}
\]
Thus, when the boundary contribution vanishes or is fixed, the integrated
elastic energy is non-negative under the conditions
\[
L_1\geq0,\quad L_1+L_2\geq0.
\]
After scaling by $L_1>0$, we set $L_1=1$ and $L_2=L$.  The admissible
three-dimensional range is then $L\geq-1$.

Biaxiality is relevant near an interface because the eigenvalue structure of
$\mathbf Q$ may change across the transition layer \cite{Popa1997}.  We describe this
structure through the decomposition:
\[ 
\mathbf Q=\lambda\left(\mathbf m\otimes \mathbf m-\frac{1}{3}\mathbf I\right)+\mu\left(\mathbf n\otimes \mathbf n-\frac{1}{3}\mathbf I\right),
\]
where $\lambda,\mu\in\R$ and $\mathbf m,\mathbf n\in\mathbb S^2$ may be
chosen orthogonal.  A state is biaxial if $\mathbf Q$
has three distinct eigenvalues. Instead, a uniaxial state has the form
\[
\mathbf Q=s\left(\mathbf n\otimes \mathbf n-\frac{1}{3}\mathbf I\right),\quad s\in\R,\quad \mathbf n\in\mathbb S^2,
\]
whose bulk energy reduces to
\[
f_B^{a,b,c}(\mathbf Q)=\frac{s^2}{27}\left(9a-2bs+3cs^2\right).
\]
For $a,b,c>0$, coexistence means $b^2=27ac$.  The spectral inequality
\cite{MZ10}
\[
\tr(\mathbf Q^3)\leq\frac{1}{\sqrt6}|\mathbf Q|^3,
\quad |\mathbf Q|^2=\tr(\mathbf Q^2),
\quad \mathbf Q\in\Szero,
\]
gives
\[
f_B^{a,b,c}(\mathbf Q)
\geq\frac{c}{4}|\mathbf Q|^2
\left(|\mathbf Q|-\sqrt{\frac{2a}{c}}\right)^2\geq0.
\]
Equality in the spectral inequality and in the factorization gives the well
set
\begin{equation}\label{eq:bulk-wells}
\{0\}\cup\left\{\sqrt{\frac{3a}{c}}
\left(\mathbf n\otimes \mathbf n-\frac{1}{3}\mathbf I\right)\colon
\mathbf n\in\mathbb S^2\right\}.
\end{equation}
We adopt the non-dimensional normalization
\[
a=1,\quad b=9,\quad c=3.
\]
The interface joins the isotropic well at $z=-\infty$ with the nematic well at
$z=+\infty$. We impose:
\begin{equation}\label{eq:intro-interface-bc}
\mathbf Q(x_1,x_2,-\infty)=0,\quad \mathbf Q(x_1,x_2,+\infty)=\mathbf n(x_1,x_2)\otimes \mathbf n(x_1,x_2)-\frac{1}{3}\mathbf I.
\end{equation}
A one-dimensional profile $X\colon\R\to\R^m$ is called a heteroclinic
connection from $X_-$ to $X_+$ if
\[
X_-\neq X_+,\quad
\lim_{z\to-\infty}X(z)=X_-,\quad
\lim_{z\to+\infty}X(z)=X_+.
\]
Thus, an isotropic--nematic interface is a heteroclinic connection in the
order-parameter space, with $z$ as the orbit parameter.
Let $\boldsymbol{\nu}$ be the unit normal to the interface.  Homeotropic anchoring means
that $\mathbf n$ is parallel to $\boldsymbol{\nu}$.  Planar anchoring means that $\mathbf n$ is orthogonal
to $\boldsymbol{\nu}$.  Intermediate orientations are tilted.  For a profile that
depends only on the normal coordinate $z$, the reduced energy is
\begin{equation}\label{eq:intro-one-dimensional-energy}
\mathcal F_0(\mathbf Q)=\int_\R\left\{\frac{1}{2}|\mathbf Q'|^2+\frac{L}{2}\sum_{i=1}^3((\mathbf Q')_{i3})^2+f_B(\mathbf Q)\right\}\mathrm{d} z,
\end{equation}
Here $f_B=f_B^{1,9,3}$, and $(\cdot)'=\partial_z(\cdot)$.

The sign of $L$ selects the preferred anchoring.  The standard comparison
between homeotropic and planar uniaxial profiles goes back to
\cite{DeGennes1971}.  In the normalization of
\eqref{eq:intro-one-dimensional-energy}, direct substitution gives the
respective elastic densities
\[
\frac{1}{2}\left(1+\frac{2}{3}L\right)|\partial_z\mathbf Q|^2,\quad
\frac{1}{2}\left(1+\frac{1}{6}L\right)|\partial_z\mathbf Q|^2.
\]
The first coefficient is smaller when $L<0$, while the second is smaller
when $L>0$.  This comparison favors homeotropic anchoring for negative $L$
and planar anchoring for positive $L$.

The case $L=0$ admits an explicit description.  After translating the result
of Park, Wang, Zhang, and Zhang \cite{Park2017} into the normalization used
here, a global one-dimensional minimizer subject to
\eqref{eq:intro-interface-bc} has the form
\[
\mathbf Q(z)=\frac{1}{2}\left(1+\tanh\frac{z-z_0}{2}\right)
\left(\mathbf n\otimes \mathbf n-\frac{1}{3}\mathbf I\right),
\]
where $z_0\in\R$.  This is the profile in the normalization used here.  The
spatial variable must be rescaled under a different bulk normalization.
Park, Wang, Zhang, and Zhang also constructed a homeotropic equilibrium $\mathbf Q_h$ for
$L\neq0$.  This equilibrium is stable under one-dimensional perturbations
for $-\frac{3}{2}<L\leq0$ and unstable for $L>0$.  Chen, Zhang, and Zhang
\cite{Chen2018} proved a related rigidity result for the full energy at $L=0$.  Their result requires assumptions about the level sets.

Wu proved that the homeotropic profile $\mathbf Q_h$ is stable under
three-dimensional perturbations for $-1\leq L\leq0$
\cite[Theorem 3.1]{Wu2025}.  In addition, for $L>-\frac{3}{2}$,
\cite[Theorem 3.2]{Wu2025} constructs a non-negative minimizer of the reduced
energy within the diagonal planar class.

We write this
profile as $\mathbf Q_p=\mathbf Q^d(S,T)$, where
\begin{equation}\label{planar}
\mathbf Q^d(S,T):=\frac{1}{3}\diag(S+3T,S-3T,-2S).
\end{equation}
Substitution into \eqref{eq:intro-one-dimensional-energy} gives the reduced
energy
\begin{equation}\label{eq:reduced-energy}
\begin{aligned}
\mathcal E_L(S,T)&:=3\mathcal F_0(\mathbf Q^d(S,T))
=\int_\R\{\alpha(S')^2+3(T')^2+W(S,T)\}\mathrm{d} z,
\end{aligned}
\end{equation}
where
\[
\alpha:=1+\frac{2L}{3},\quad
W(S,T):=S^2+3T^2+2(S^3-9ST^2)+(S^2+3T^2)^2.
\]
The boundary conditions for the diagonal planar profile \eqref{planar} are
\begin{equation}\label{eq:planar-bc}
(S,T)(-\infty)=(0,0),\quad
(S,T)(+\infty)=\left(\frac{1}{2},\frac{1}{2}\right).
\end{equation}
Define the diagonal planar admissible class
\[
\mathcal A_{\mathrm p}
:=\{(S,T)\in H^1_{\mathrm{loc}}(\R;\R^2)\colon
\mathcal E_L(S,T)<+\infty,\quad
(S,T)\text{ satisfies \eqref{eq:planar-bc}}\}.
\]
We call $(S,T)\in\mathcal A_{\mathrm p}$ a diagonal planar minimizer if
\begin{equation}\label{eq:diagonal-minimizer}
\mathcal E_L(S,T)
=\inf_{(\widetilde S,\widetilde T)\in\mathcal A_{\mathrm p}}
\mathcal E_L(\widetilde S,\widetilde T).
\end{equation}
This terminology refers only to minimization after imposing the diagonal
ansatz \eqref{planar}; it does not assert that $\mathbf Q^d(S,T)$ minimizes
\eqref{eq:intro-one-dimensional-energy} among all $\Szero$-valued profiles
satisfying \eqref{eq:intro-interface-bc}.

The Euler--Lagrange equations associated with \eqref{eq:reduced-energy} are
\begin{equation}\label{eq:EL}
\begin{cases}
\alpha S''=S+3S^2-9T^2+2S(S^2+3T^2),\\
T''=T-6ST+2T(S^2+3T^2).
\end{cases}
\end{equation}
The condition $\alpha>0$, or equivalently $L>-\frac{3}{2}$, is the coercivity
condition for \eqref{eq:reduced-energy}.

For a profile $\mathbf Q$ at which the first variation of
\eqref{eq:intro-one-dimensional-energy} vanishes, called a critical profile,
we use the normalized second variation
\begin{equation}\label{eq:second-variation-definition}
E_{\mathbf Q}(\mathbf P,\mathbf P)
:=\lim_{\varepsilon\to0}
\frac{1}
{3\varepsilon^2}(\mathcal F_0(\mathbf Q+\varepsilon\mathbf P)-\mathcal F_0(\mathbf Q)).
\end{equation}
Stability and instability under one-dimensional perturbations mean,
respectively,
\[
\begin{aligned}
\mathbf Q\ \text{is stable}
&\quad\Longleftrightarrow\quad
E_{\mathbf Q}(\mathbf P,\mathbf P)\geq0
\quad\text{for any }\mathbf P\in C_0^\infty(\R;\Szero),\\
\mathbf Q\ \text{is unstable}
&\quad\Longleftrightarrow\quad
E_{\mathbf Q}(\mathbf P,\mathbf P)<0
\quad\text{for some }\mathbf P\in C_0^\infty(\R;\Szero).
\end{aligned}
\]
For a planar profile $(S,T)$ as in \eqref{planar}, define
\begin{equation}\label{eq:VL}
a_L:=\frac{2+L}{2},\quad
V_L:=1+3(S-3T)+2(S^2+3T^2),
\end{equation}
\begin{equation}\label{eq:critical-form}
\mathfrak q_L[p]
:=\int_\R(a_L(p')^2+V_Lp^2)\mathrm{d} z,
\quad p\in H^1(\R),
\end{equation}
and the associated self-adjoint operator
\begin{equation}\label{eq:critical-operator}
\mathscr L_L:=-a_L\frac{\mathrm{d}^2}{\mathrm{d} z^2}+V_L(z),\quad
\mathcal D(\mathscr L_L)=H^2(\R)\subset L^2(\R).
\end{equation}
For $L>-\frac{3}{2}$, \eqref{eq:VL} gives $a_L>\frac{1}{4}$.  By
\eqref{eq:planar-bc}, there exists $R>0$ such that
\[
|S(z)|+|T(z)|\leq2,\quad|z|\geq R.
\]
Continuity on $[-R,R]$ gives
\[
\sup_{z\in\R}(|S(z)|+|T(z)|)
\leq\max\left\{2,\max_{|z|\leq R}(|S(z)|+|T(z)|)\right\}<+\infty.
\]
Therefore, $V_L\in L^\infty(\R)$ by \eqref{eq:VL}, and
\eqref{eq:critical-operator} is self-adjoint on $H^2(\R)$ as a bounded
perturbation of $-a_L\frac{\mathrm{d}^2}{\mathrm{d} z^2}$ by the
Kato--Rellich theorem \cite[Theorem 6.4]{Teschl2014}.

For the off-diagonal perturbation \eqref{eq:Pp}, the tensorial second
variation reduces to \eqref{eq:critical-form} by \eqref{eq:I31}.  The
instability proof for $L<0$ in \cite[Theorem 3.3]{Wu2025} assumes the
additional condition
\begin{equation}\label{eq:old-condition}
\int_{\{T>S\}}T^2((T'-S')^2-9(T-S)^3)\mathrm{d} z<0.
\end{equation}

Our aim is to solve this planar stability problem without this assumption.
We first prove new derivative inequalities for the diagonal
planar minimizer.  These inequalities reveal a rotation mode that produces a negative
second variation.  They also give a sharp instability range for negative
$L$.  We then analyze the threshold $L=0$ and define an optimal stability
index as the bottom of a scalar spectrum.  Finally, we correct the
normalization of a previously stated explicit interface
profile. Removing \eqref{eq:old-condition} is, therefore, one part of a
broader analysis.

The negative direction has a geometric interpretation.  For planar anchoring, the director is
tangent to the interface and orthogonal to its normal.  When $L<0$, the
anisotropic elastic term favors alignment with the normal instead.  A small
rotation toward the normal lowers this term near the interface.  The
derivative inequalities proved below show that the second
variation is strictly negative.  This is consistent with the oblique or
tilted anchoring described in \cite{Kamil2009a}. It also agrees with the strong homeotropic anchoring in the dynamic sharp-interface limit of \cite{Dong2025}.

\begin{theorem}[Sharp planar instability]\label{thm:main}
Let $-\frac{3}{2}<L<0$, and let $(S,T)\in\mathcal A_{\mathrm p}$ be a
non-negative diagonal planar minimizer. Let
\[
\mathbf Q_p(z)=\frac{1}{3}\diag(S+3T,S-3T,-2S).
\]
Then there exists $\mathbf P\in C_0^\infty(\R;\Szero)$ such that the normalized
second variation \eqref{eq:second-variation-definition} satisfies
\[
E_{\mathbf Q_p}(\mathbf P,\mathbf P)<0.
\]
In particular, $\mathbf Q_p$ is unstable with respect to one-dimensional
perturbations and hence also unstable in any larger perturbation class
that contains them.
\end{theorem}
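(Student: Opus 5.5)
The plan is to test the second variation on a \emph{rotation mode}: an infinitesimal rotation of $\mathbf Q_p$ in the $(x_1,x_3)$-plane, which tilts the in-plane director toward the normal. Since $(\mathbf Q_p)_{11}-(\mathbf Q_p)_{33}=S+T$, this generator is $(S+T)(\mathbf e_1\otimes\mathbf e_3+\mathbf e_3\otimes\mathbf e_1)$. Accordingly I take $\mathbf P=p(z)(\mathbf e_1\otimes\mathbf e_3+\mathbf e_3\otimes\mathbf e_1)$, the off-diagonal perturbation \eqref{eq:Pp}. For it the normalized second variation reduces, as recorded after \eqref{eq:critical-operator}, to $E_{\mathbf Q_p}(\mathbf P,\mathbf P)=\mathfrak q_L[p]$ (up to a fixed positive factor). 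So it suffices to find $p\in C_0^\infty(\R)$ with $\mathfrak q_L[p]<0$, and the natural candidate is a truncation of $p_0:=S+T$.

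\textbf{Step 1 (algebraic identity).} Write $R=S^2+3T^2$. From \eqref{eq:VL},
\[
V_L(S+T)=S+T+3S^2-6ST-9T^2+2R(S+T).
\]
This is exactly the sum of the right-hand sides of \eqref{eq:EL}, so $V_L(S+T)=\alpha S''+T''$. Hence
\[
\mathscr L_L(S+T)=(\alpha-a_L)S''+(1-a_L)T''=\frac{L}{6}S''-\frac{L}{2}T''.
\]
This vanishes at $L=0$, which is consistent with rotation invariance there. Formally integrating by parts gives
\[
\mathfrak q_L[S+T]=\frac{L}{6}\int_\R(3T'-S')(S'+T')\,\mathrm dz .
\]
The boundary terms vanish because $S',T'\to0$ exponentially at $\pm\infty$; this decay follows from \eqref{eq:EL}, since both endpoints are hyperbolic equilibria for $\alpha>0$.

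\textbf{Step 2 (truncation).} The function $S+T$ tends to $1$ at $+\infty$, so it is not in $L^2$. Note, however, that $V_L(+\infty)=1-3+2=0$ and that $V_L(S+T)$ decays exponentially. Set $p_R=\chi(z/R)(S+T)$, where $\chi\in C_0^\infty$ equals $1$ on $[-1,1]$. Then
\[
\mathfrak q_L[p_R]=\int_\R \chi_R^2\bigl(a_L(p_0')^2+V_Lp_0^2\bigr)\,\mathrm dz+O(R^{-1}),
\]
because the cross term $\int 2a_L\chi_R\chi_R'p_0p_0'$ is exponentially small and $\int a_L(\chi_R')^2p_0^2=O(R^{-1})$. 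By dominated convergence the first integral tends to the expression in Step 1. Here I use the identity in the form $\int\bigl(a_L(p_0')^2+V_Lp_0^2\bigr)=\int p_0\,\mathscr L_Lp_0$, which is justified on the truncated domain with the boundary terms vanishing in the limit. Thus
\[
\lim_{R\to\infty}\mathfrak q_L[p_R]=\frac{L}{6}\,J,\qquad J:=\int_\R(3T'-S')(S'+T')\,\mathrm dz .
\]
Since $L<0$, it remains to prove $J>0$. A fixed large $R$ then gives $\mathfrak q_L[p_R]<0$, and $p_R$ is already in $C_0^\infty$ because $S,T$ are smooth by \eqref{eq:EL}.

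\textbf{Step 3 (main obstacle: the derivative inequality $J>0$).} Expanding,
\[
J=3\int(T')^2+2\int S'T'-\int(S')^2 .
\]
The tools I would try, in order, are as follows.

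First, the first integral $\alpha(S')^2+3(T')^2=W(S,T)$ of \eqref{eq:EL}, which comes from the conserved Hamiltonian with zero level fixed by \eqref{eq:planar-bc}. It gives
\[
\int(S')^2=\tfrac12\int\bigl(\alpha(S')^2+3(T')^2\bigr)\cdot\tfrac{2}{\alpha}-\tfrac{3}{\alpha}\int(T')^2,
\]
so $J$ can be rewritten in terms of $\int W$, $\int(T')^2$ and $\int S'T'$.

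Second, the minimality of $(S,T)$ in $\mathcal A_{\mathrm p}$, tested against explicit competitors. Domain variations $z\mapsto\lambda z$ give equipartition. Mixing variations $(S,T)\mapsto(S+\varepsilon\phi(T),T)$, or reparametrizations along the path, give relations between $\int(S')^2$ and $\int(T')^2$.

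Third, the non-negativity $S,T\ge0$ together with a phase-plane argument, which should control the sign of $S'T'$. The aim is to show that the path runs essentially monotonically from $(0,0)$ to $(\tfrac12,\tfrac12)$, so that $\int S'T'\ge0$ and $3\int(T')^2\ge\int(S')^2$. The latter inequality would be derived from the Hamiltonian identity together with a comparison of the increments $\int S'=\int T'=\tfrac12$, using Cauchy--Schwarz along the orbit.

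I expect this inequality to be the genuinely new ingredient, replacing the condition \eqref{eq:old-condition}. Steps 1 and 2 are routine.
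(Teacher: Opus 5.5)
Your Steps 1 and 2 match the paper: the same rotation mode $p_0=S+T$, the same identity $\mathscr L_L(S+T)=\frac{L}{6}(S''-3T'')$ obtained by adding the equations in \eqref{eq:EL}, and the same slow cutoff. The gap is Step 3. It is the whole difficulty, and you give a list of tools rather than a proof. Two ingredients are missing.

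\emph{Monotonicity $S'>0$, $T'>0$.} You do not argue this, and it is here that minimality, not just $S,T\ge0$, is needed. The paper uses the Jacobi system $\alpha u''=Au+Bv$, $v''=Cu+Dv$ for $u=S'$, $v=T'$, whose coupling is $B=3C=6T(2S-3)<0$. Minimality makes the diagonal second variation non-negative. Evaluating it at $(u_+,v_+)$ gives
$0\le\mathcal J[u_+,v_+]=\int_\R B(u_+v_-+u_-v_+)\,\mathrm dz\le0$, hence $uv\ge0$. A common zero of $u$ and $v$ is then excluded by \eqref{eq:equipartition}, which would force $W=0$ there and hence equilibrium Cauchy data, together with ODE uniqueness.

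\emph{The inequality $3T'>S'$.} This is the key new point, and your proposed route to $3\int(T')^2\ge\int(S')^2$ cannot work as described. The equal increments $\int S'=\int T'=\frac12$ control only $L^1$ norms, and Cauchy--Schwarz bounds $\int(S')^2$ from below, never from above. The Hamiltonian identity and dilations $z\mapsto\lambda z$ only give equipartition, which does not separate $S'$ from $T'$. None of your tools shows where the hypothesis $L<0$, i.e.\ $\alpha<1$, would enter.

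The paper proves $3T'>S'$ pointwise by a maximum principle, using $\alpha<1$ twice.
\begin{itemize}
\item Put $w=S'-3T'$. The Jacobi system gives $w''-c_0w=F_0T'$, where $c_0=A/\alpha-3C>0$ and $F_0=(3A+B)/\alpha-9C-3D$.
\item The a priori bound $T-S<\frac16$ (Lemma~\ref{lem:gap}) comes from a maximum principle for $\phi=T-S$. This function solves $\phi''-c\phi=(\alpha-1)S''$, and one combines this with the first equation of \eqref{eq:EL} at the maximum point.
\item That bound gives $3A+B>0$. With $\alpha<1$ it follows that $F_0>3A+B-9C-3D=12\bigl(S(3-2T)+S^2+3T(1-T)\bigr)>0$.
\item Since $T'>0$ and $w\to0$ at $\pm\infty$, both a positive maximum of $w$ and a zero of $w$ (which would then be a maximum) contradict $w''=c_0w+F_0T'>0$ at such a point. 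Hence $3T'-S'>0$ on $\R$.
\end{itemize}
Together with $S'+T'>0$, this makes the integrand of $J$ strictly positive. Without these two lemmas or a substitute, your argument only shows ``if $J>0$ then $\mathbf Q_p$ is unstable.'' That is again a conditional result, with a different unverified hypothesis in place of \eqref{eq:old-condition}, which the theorem is meant to remove.
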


\begin{rem}
Several comments are in order.
\begin{enumerate}
\item We first outline the proof.  Proposition~\ref{prop:monotone} and
Lemma~\ref{lem:derivative-order} establish profile estimates
\[
S'>0,\quad T'>0,\quad 3T'-S'>0\quad\hbox{on }\R.
\]
The third inequality is the key new point.

Lemma~\ref{lem:rotation} shows
that the critical off-diagonal operator $\mathscr L_L$ defined in
\eqref{eq:critical-operator} satisfies
\[
\mathscr L_L(S+T)=\frac{L}{6}(S''-3T'').
\]
The same lemma also gives the absolutely convergent identity
\[
\mathfrak q_L[S+T]
=\frac{L}{6}\int_\R(S'+T')(3T'-S')\mathrm{d} z<0.
\]

Note that the function $S+T$ is not in $L^2(\R)$ as it tends to $1$ as
$z\to+\infty$. A slow cutoff in the nematic tail adds arbitrarily little
energy and produces a compactly supported negative direction.  This argument
turns the geometric rotation of the director into a strict second-variation
test.
\item The range in Theorem~\ref{thm:main} is sharp for the reduced planar
problem in the following sense.  The coefficient
$\alpha=1+\frac{2L}{3}$ in \eqref{eq:reduced-energy} is exactly positive for 
$L>-\frac{3}{2}$, and Theorem~\ref{thm:main} covers its entire negative
range $(-\frac{3}{2},0)$.  The full three-dimensional elastic energy is
non-negative only for $L\geq-1$ and strictly coercive for $L>-1$. Hence, its
negative admissible range is $[-1,0)$, while $(-\frac{3}{2},-1)$ belongs only to the coercive planar reduction.  

The optimal second-variation value at fixed $L$ is
\begin{equation}\label{eq:Lambda}
\Lambda(L):=\inf\sigma(\mathscr L_L)
=\inf_{p\in H^1(\R)\backslash\{0\}}
\frac{\mathfrak q_L[p]}{\|p\|_{L^2}^2}.
\end{equation}
We prove $\Lambda(L)<0$ for $-\frac{3}{2}<L<0$ and
$\Lambda(0)=0$ in Proposition~\ref{prop:zero-resonance} by the exact
factorization \eqref{eq:L0-factorization}. Thus, $L=0$ is the endpoint of
this negative spectral mechanism.  No stability conclusion for $L>0$ is claimed.
\end{enumerate}
\end{rem}

\subsection{Organization of this paper} Section~\ref{sec:model} fixes the normalization and the exact length scales of the explicit profiles.  Section~\ref{sec:profile} proves the estimates for the diagonal planar minimizer.  Section~\ref{sec:spectral} identifies
the scalar second-variation operator, and Section~\ref{sec:instability}
proves Theorem~\ref{thm:main}. The endpoint factorization is given in
Section~\ref{sec:threshold}.

\section{Reduced equations and explicit profiles}\label{sec:model}

Under the normalization $a=1$, $b=9$, $c=3$, $L_1=1$, and $L_2=L$, we recall that the
three-dimensional energy is
\[
\mathcal F(\mathbf Q)=\int_{\R^3}\left\{
f_B(\mathbf Q)+\frac{1}{2}\partial_k\mathbf Q_{ij}\partial_k\mathbf Q_{ij}
+\frac{L}{2}\partial_j\mathbf Q_{ij}\partial_k\mathbf Q_{ik}\right\}\mathrm{d} x,
\]
where
\[
f_B(\mathbf Q)=\frac{1}{2}\tr(\mathbf Q^2)-3\tr(\mathbf Q^3)
+\frac{3}{4}(\tr(\mathbf Q^2))^2.
\]
For $\mathbf Q=\mathbf Q(z)$, this energy reduces to
\eqref{eq:intro-one-dimensional-energy}.  Substituting
$\mathbf Q=\mathbf Q^d(S,T)$ gives \eqref{eq:reduced-energy}, and its first
variation gives the equations \eqref{eq:EL} with boundary conditions
\eqref{eq:planar-bc}.

The length scale is fixed by \eqref{eq:EL}.  For $L>-\frac{3}{2}$,
\eqref{eq:reduced-energy} gives $\alpha>0$.  The homeotropic condition
$\mathbf n\parallel\boldsymbol{\nu}$ corresponds in the diagonal
ansatz \eqref{planar} to $S_h=-r$ and $T_h\equiv0$, where $r$ solves
the scalar boundary-value problem
\begin{equation}\label{eq:homeotropic-bvp}
\begin{gathered}
\alpha r''=r(1-r)(1-2r),\quad z\in\R,\\
r(-\infty)=0,\quad r(+\infty)=1.
\end{gathered}
\end{equation}
This is the homeotropic equilibrium considered in \cite{Park2017}.  By
\eqref{eq:homeotropic-bvp},
\begin{equation}\label{eq:homeotropic-first-integral}
\frac{\mathrm{d}}{\mathrm{d} z}
\left\{\frac{\alpha}{2}(r')^2-\frac{1}{2}r^2(1-r)^2\right\}
=r'\bigl\{\alpha r''-r(1-r)(1-2r)\bigr\}=0.
\end{equation}
By the endpoint conditions in \eqref{eq:homeotropic-bvp} and the mean-value
theorem, there exist $z_n^-\in(-n-1,-n)$ and $z_n^+\in(n,n+1)$ such that
\[
r'(z_n^-)=r(-n)-r(-n-1)\to0,\quad
r'(z_n^+)=r(n+1)-r(n)\to0.
\]
Evaluating \eqref{eq:homeotropic-first-integral} along either sequence shows
that its constant is zero.  For the
increasing connection, 
\[
\sqrt\alpha\,r'=r(1-r),\quad
\log\frac{r}{1-r}=\frac{z-z_0}{\sqrt\alpha}.
\]
Hence
\[
S_h(z)=-\frac{1}{2}\left(1+
\tanh\frac{z-z_0}{2\sqrt\alpha}\right),\quad
T_h(z)=0,
\]
and these functions satisfy the homeotropic boundary conditions
\[
S_h(-\infty)=0,\quad S_h(+\infty)=-1,\quad T_h\equiv0.
\]

At $L=0$, the substitution $S=T=s$ in
\eqref{eq:EL} with boundary conditions \eqref{eq:planar-bc} gives the scalar
boundary-value problem
\begin{equation}\label{eq:planar-L0-bvp}
\begin{gathered}
s''=s(1-2s)(1-4s),\quad z\in\R,\\
s(-\infty)=0,\quad s(+\infty)=\frac{1}{2}.
\end{gathered}
\end{equation}
The increasing solution is
\begin{equation}\label{eq:correct-planar-L0}
s(z)=\frac{1}{4}\left(1+\tanh\frac{z-z_0}{2}\right).
\end{equation}
Indeed, \eqref{eq:correct-planar-L0} satisfies
\[
s'=s(1-2s),\quad
s''=s(1-2s)(1-4s),\quad
s(-\infty)=0,\quad s(+\infty)=\frac{1}{2},
\]
and therefore solves \eqref{eq:planar-L0-bvp}.  The corresponding planar
profile is
\[
\mathbf Q_p(z)=\mathbf Q^d(s(z),s(z)),
\]
which is the $L=0$ interface described in \cite{Park2017}.

\section{The diagonal planar minimizer}\label{sec:profile}

A pair satisfying \eqref{eq:diagonal-minimizer} is constructed in
\cite[Theorem 3.2]{Wu2025}; the minimization is over
$\mathcal A_{\mathrm p}$, not over all $\Szero$-valued profiles.  Its proof
applies the degenerate-metric method of \cite[Theorem 3.1]{Zuniga2016}.  We
record the properties of this diagonal planar minimizer used below.

\begin{proposition}\label{prop:existence}
Let $\alpha>0$.  The energy \eqref{eq:reduced-energy} has a minimizer
$(S,T)\in\mathcal A_{\mathrm p}$ for which $S,T\geq0$.  Any such
non-negative diagonal planar minimizer is smooth,
and there exist $C,c>0$ such that
\begin{equation}\label{eq:profile-decay}
\begin{aligned}
|S(z)|+|T(z)|
+\sum_{j=1}^2(|S^{(j)}(z)|+|T^{(j)}(z)|)
&\leq C\exp(cz),\quad z\leq0,\\
\left|S(z)-\frac{1}{2}\right|+\left|T(z)-\frac{1}{2}\right|
+\sum_{j=1}^2(|S^{(j)}(z)|+|T^{(j)}(z)|)
&\leq C\exp(-cz),\quad z\geq0.
\end{aligned}
\end{equation}
Moreover,
\begin{equation}\label{eq:box-bounds}
0<S(z)\leq\frac{1}{2},\quad 0<T(z)\leq\frac{1}{2},\quad z\in\R.
\end{equation}
\end{proposition}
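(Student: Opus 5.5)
The proof splits into four parts: existence, regularity, exponential decay, and the pointwise bounds \eqref{eq:box-bounds}. For existence we invoke \cite[Theorem 3.2]{Wu2025} directly. That result produces, for every $\alpha>0$, a minimizer $(S,T)\in\mathcal A_{\mathrm p}$ of \eqref{eq:reduced-energy} with $S,T\geq0$, via the degenerate-metric method of \cite[Theorem 3.1]{Zuniga2016}. All remaining assertions are then proved for an arbitrary non-negative diagonal planar minimizer.

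\emph{Regularity.} Testing against $C_0^\infty$ variations shows that $(S,T)$ is a weak solution of \eqref{eq:EL}. Since $(S,T)\in H^1_{\mathrm{loc}}\subset C^0$, the right-hand sides of \eqref{eq:EL} are continuous. Hence $S'',T''\in C^0$, and bootstrapping gives $C^\infty$. The argument given after \eqref{eq:critical-operator} then shows that $(S,T)$ is bounded, and this also bounds $S'$ and $T'$ through \eqref{eq:EL}.

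\emph{Exponential decay.} For the tails we linearize \eqref{eq:EL} at the two endpoints.
\begin{itemize}
\item At $(0,0)$ the Hessian of $W$ is $\diag(2,6)$. The linearized system $\alpha S''=S$, $T''=T$ is therefore hyperbolic with rates $\alpha^{-1/2}$ and $1$.
\item At $(\tfrac12,\tfrac12)$ a direct computation gives $W_{SS}=6$, $W_{TT}=18$, $W_{ST}=-6$ (a positive-definite Hessian), so this rest point is hyperbolic as well.
\end{itemize}
Since $(S,T)$ converges to these rest points at $\mp\infty$, the stable/unstable manifold theorem for the first-order system in $(S,S',T,T')$ places the trajectory on the corresponding invariant manifold near each end. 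This gives exponential convergence of $(S,T,S',T')$, and the equations \eqref{eq:EL} transfer the bound to $S'',T''$, which yields \eqref{eq:profile-decay}. To run this argument one needs $(S',T')\to0$, which we obtain from the mean-value argument used for \eqref{eq:homeotropic-first-integral} together with boundedness of $S'',T''$.

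\emph{Strict positivity.} Write the second equation of \eqref{eq:EL} as $T''=c(z)T$ with $c$ bounded. If $T(z_0)=0$, then $z_0$ is a minimum of $T\ge0$, so $T'(z_0)=0$. ODE uniqueness then forces $T\equiv0$, which contradicts $T(+\infty)=\tfrac12$; hence $T>0$. If $S(z_0)=0$, then $z_0$ is a minimum of $S$, so $S''(z_0)\ge0$. But the first equation gives $\alpha S''(z_0)=-9T(z_0)^2<0$, a contradiction; hence $S>0$.

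\emph{Upper bounds: the main obstacle.} We use truncation. Set $\widetilde S=\min\{S,\tfrac12\}$ and $\widetilde T=\min\{T,\tfrac12\}$. These preserve \eqref{eq:planar-bc} and do not increase the gradient terms. It remains to show $W(\widetilde S,\widetilde T)\le W(S,T)$ pointwise. On the relevant slabs one has
\[
\partial_SW=2S+6S^2-18T^2+4S(S^2+3T^2),
\]
which equals $3-12T^2\ge0$ at $S=\tfrac12$ when $T\le\tfrac12$, and
\[
\partial_TW=6T\,(1-6S+2S^2+6T^2),
\]
which vanishes at $(S,T)=(\tfrac12,\tfrac12)$. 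This is borderline, so the decrease of $W$ has to be checked on the whole region $\{S\ge\tfrac12\}\cup\{T\ge\tfrac12\}$ by integrating along segments back to the box. I would do this first, possibly truncating in the order $T$ then $S$, or along a suitable path, to avoid the corner degeneracy.

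Once $\mathcal E_L(\widetilde S,\widetilde T)\le\mathcal E_L(S,T)$ is established, the truncated pair is also a minimizer and hence smooth by the regularity step. The truncated pair agrees with $(S,T)$ on a neighbourhood of $-\infty$ (since $(S,T)\to(0,0)$ there), and both solve \eqref{eq:EL}. ODE uniqueness therefore gives $(\widetilde S,\widetilde T)=(S,T)$ everywhere, which is $S,T\le\tfrac12$.
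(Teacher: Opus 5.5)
You take a different route from the paper. The paper cites \cite[Proposition 6.1]{Wu2025} for smoothness, first-order exponential decay and the closed bounds $0\le S,T\le\frac12$, and proves only the second-derivative decay and strict positivity by hand. You try to derive everything after existence yourself. Your regularity, decay and positivity steps are fine; for $S>0$ your argument is shorter than the paper's, since you already know $T>0$. \textbf{The gap is the upper bound, which you flag yourself and do not prove.} Its entire content is the pointwise inequality $W(\min\{S,\tfrac12\},\min\{T,\tfrac12\})\le W(S,T)$ on $[0,\infty)^2$. Checking $\partial_SW$ only on the line $S=\frac12$ and $\partial_TW$ only at the corner does not give it. Your proposed remedy, truncating one variable at a time with $W$ decreasing at each stage, fails in either order: $\partial_TW(1,\tfrac12)=-\tfrac92$ and $\partial_SW(\tfrac12,1)=-9$. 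For instance, $W(1,\tfrac12)=\tfrac{37}{16}>W(1,\tfrac35)$.

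Truncating both variables at once does work, after a three-case check.
\begin{enumerate}
\item If $S,T\ge\frac12$: use $W(\tfrac12,\tfrac12)=0\le W(S,T)$, since $W=3f_B\ge0$.
\item If $S\ge\frac12\ge T\ge0$: for $\sigma\ge\frac12$ one has $\partial_SW(\sigma,T)=2\sigma+6\sigma^2+4\sigma^3-(18-12\sigma)T^2$. This is obviously non-negative for $\sigma\ge\frac32$. For $\sigma\le\frac32$, using $T^2\le\frac14$ it is bounded below by $4\sigma^3+6\sigma^2+5\sigma-\frac92$, which vanishes at $\sigma=\frac12$ and increases. Integrate in $\sigma$ from $\frac12$ to $S$.
\item If $0\le S\le\frac12\le T$: for $\tau\ge\frac12$ one has $\partial_TW(S,\tau)=6\tau(1-6S+2S^2+6\tau^2)\ge12\tau(\tfrac12-S)(\tfrac52-S)\ge0$. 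Integrate from $\frac12$ to $T$.
\end{enumerate}
With this inequality your closing argument is valid. The truncated pair lies in $\mathcal A_{\mathrm p}$ with no larger energy, so it is a minimizer and hence a smooth solution of \eqref{eq:EL}. It coincides with $(S,T)$ on a half-line near $-\infty$, so Cauchy uniqueness forces equality and gives $S,T\le\frac12$. That self-contained proof, avoiding \cite{Wu2025} for these bounds, is what your route gains over the paper's citation.

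Separately, your Hessian at $(\frac12,\frac12)$ is miscomputed. With $A,B,D$ from \eqref{eq:Jacobi-coefficients}, one has $W_{SS}=2A=14$, $W_{ST}=2B=-12$ and $W_{TT}=6D=18$, not $W_{SS}=6$ and $W_{ST}=-6$. The matrix is still positive definite (determinant $108$), so your hyperbolicity and decay argument is unaffected.
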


\begin{proof}
The existence with $S,T\geq0$ follows from
\cite[Theorem 3.2]{Wu2025}.  For any such diagonal planar minimizer,
\cite[Proposition 6.1 and its proof]{Wu2025} gives smoothness, the terms in
\eqref{eq:profile-decay} of order at most one, and
\[
0\leq S(z)\leq\frac{1}{2},\quad
0\leq T(z)\leq\frac{1}{2},\quad z\in\R,
\]
where the limiting values are fixed by \eqref{eq:planar-bc}.  Substitution
of these estimates into \eqref{eq:EL} gives the second-derivative terms in
\eqref{eq:profile-decay}.  It remains to prove strict positivity.
Since $\alpha>0$, the right-hand side of the first-order system associated
with \eqref{eq:EL} is polynomial and hence locally Lipschitz; its Cauchy
problem is unique.

If $T(z_0)=0$, then $T'(z_0)=0$ because $T\geq0$.  The second equation in
\eqref{eq:EL} has the form
\[
T''=(1-6S+2S^2+6T^2)T.
\]
The uniqueness of this scalar equation with $T(z_0)=T'(z_0)=0$ gives
$T\equiv0$, contrary to \eqref{eq:planar-bc}. 

If $S(z_0)=0$, then
$S'(z_0)=0$ and $S''(z_0)\geq0$.  The first equation in \eqref{eq:EL}
gives
\[
0\leq\alpha S''(z_0)=-9T(z_0)^2\leq0.
\]
Thus $T(z_0)=T'(z_0)=0$.  The Cauchy datum for \eqref{eq:EL} is then
\[
(S,T,S',T')(z_0)=(0,0,0,0),
\]
so uniqueness gives the constant solution, again contradicting
\eqref{eq:planar-bc}. 

Hence $S,T>0$ on $\R$, which proves \eqref{eq:box-bounds}.
\end{proof}

The first integral of \eqref{eq:EL} will be used repeatedly.

\begin{lemma}\label{lem:equipartition}
Let $\alpha>0$.  Any solution of \eqref{eq:EL} satisfying
\eqref{eq:planar-bc} and
\begin{equation}\label{finiteenergyass}
\mathcal E_L(S,T)<+\infty,
\end{equation}
where $\mathcal E_L$ is defined in \eqref{eq:reduced-energy},
satisfies
\begin{equation}\label{eq:equipartition}
\alpha(S')^2+3(T')^2=W(S,T).
\end{equation}
\end{lemma}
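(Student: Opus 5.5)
We use the standard first-integral argument, applied exactly as in the homeotropic computation \eqref{eq:homeotropic-first-integral}. Since $(S,T)$ solves \eqref{eq:EL} and is smooth, set
\[
H(z):=\alpha(S')^2+3(T')^2-W(S,T).
\]
Note that $\partial_SW=2S+6S^2-18T^2+4S(S^2+3T^2)$ and $\partial_TW=6T-36ST+12T(S^2+3T^2)$. By \eqref{eq:EL} these equal $2\alpha S''$ and $6T''$, respectively. Differentiating gives
\[
H'=2S'(\alpha S''-\tfrac12\partial_SW)+6T'(T''-\tfrac16\partial_TW)=0.
\]
So $H$ is a constant, $H\equiv K$ on $\R$.

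We next show $K=0$. By \eqref{eq:planar-bc}, $(S,T)\to(0,0)$ as $z\to-\infty$ and $W(0,0)=0$; likewise $(S,T)\to(\tfrac12,\tfrac12)$ as $z\to+\infty$. A direct evaluation gives $W(\tfrac12,\tfrac12)=\tfrac14+\tfrac34+2(\tfrac18-\tfrac98)+1=0$. Hence $W(S,T)(z)\to0$ at both ends.

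For the derivative terms there are two routes. The first uses finiteness of the energy \eqref{finiteenergyass}, with no decay assumed. Since $\alpha>0$ and $W\geq0$ (as $W=3f_B(\mathbf Q^d)$ and $f_B\ge0$), we get $\alpha(S')^2+3(T')^2\in L^1(\R)$. Then $K+W=\alpha(S')^2+3(T')^2$ is integrable on $[n,\infty)$ for large $n$, while $W\to0$ there. If $K\neq0$, the integrand would tend to $K$, contradicting integrability when $K>0$ and positivity of the left side when $K<0$. The second route mirrors the homeotropic argument: by the mean-value theorem there exist $z_n^\pm\in(\pm n,\pm n\pm1)$ where $S'(z_n^\pm)\to0$. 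One needs both $S'$ and $T'$ small along a common sequence, and this again follows from integrability of $(S')^2+(T')^2$. We therefore choose the $L^1$ argument.

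Letting $z\to+\infty$ (or $-\infty$) in $H\equiv K$ yields $K=0$, which is \eqref{eq:equipartition}.

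The only delicate point is that the limit of the gradient terms is not given a priori by \eqref{eq:planar-bc}. We avoid needing it by relying on integrability from \eqref{finiteenergyass} rather than on pointwise decay. This is also why the lemma is stated with the finite-energy hypothesis instead of citing \eqref{eq:profile-decay}.
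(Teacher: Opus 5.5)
Your proof is correct and follows the paper's argument: the same conserved quantity $H$, finiteness of the energy together with $W\geq0$ to get $S',T'\in L^2(\R)$, and $W\to0$ at an end to force the constant to vanish. The only difference is in the last limit step. The paper works at $-\infty$ and picks points $z_n\in[-n-1,-n]$ where $(S')^2+(T')^2$ is bounded by its local $L^2$ mass, which is essentially your ``second route''. You instead argue at $+\infty$, using $W(\tfrac12,\tfrac12)=0$, and conclude from integrability together with the sign of $\alpha(S')^2+3(T')^2$; both versions are valid.
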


\begin{proof}
Set
\[
H:=\alpha(S')^2+3(T')^2-W(S,T).
\]
Multiply the two equations in \eqref{eq:EL} by $2S'$ and $6T'$,
respectively.  The result is
\[
H'=0.
\]
Since $W=3f_B(\mathbf Q^d(S,T))\geq0$ by \eqref{eq:bulk-wells},
the finite-energy assumption \eqref{finiteenergyass} and \eqref{eq:reduced-energy} give
\[
S',T'\in L^2(\R).
\]
Consequently,
\[
\int_{-n-1}^{-n}\bigl((S')^2+(T')^2\bigr)\mathrm{d} z\to0,
\quad n\to+\infty.
\]
Choose $z_n\in[-n-1,-n]$ such that
\[
(S'(z_n))^2+(T'(z_n))^2
\leq\int_{-n-1}^{-n}\bigl((S')^2+(T')^2\bigr)\mathrm{d} z.
\]
Then $z_n\to-\infty$ and
\[
S'(z_n)\to0,\quad T'(z_n)\to0.
\]
Equation~\eqref{eq:planar-bc} gives
\[
W(S(z_n),T(z_n))\to W(0,0)=0.
\]
Thus $H=H(z_n)\to0$.  Since $H'=0$, we have $H\equiv0$, which is
\eqref{eq:equipartition}.
\end{proof}

The derivative signs required in Section~\ref{sec:instability} follow from
minimality within $\mathcal A_{\mathrm p}$ as follows.

\begin{proposition}\label{prop:monotone}
Any non-negative diagonal planar minimizer in
Proposition~\ref{prop:existence} satisfies
\[
S'(z)>0,\quad T'(z)>0,\quad z\in\R.
\]
\end{proposition}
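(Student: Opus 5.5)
Monotonicity will come from minimality in two steps: first non-strict monotonicity, via a rearrangement-type comparison, then strictness, via the Euler--Lagrange system \eqref{eq:EL} together with ODE uniqueness.

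\textbf{Step 1: non-strict monotonicity.} Set
\[
\widetilde S(z):=\sup_{y\leq z}S(y),\quad \widetilde T(z):=\sup_{y\leq z}T(y).
\]
The running maximum agrees with $S$ where it strictly increases and is locally constant elsewhere, so $\widetilde S$ is Lipschitz with $|\widetilde S'|\leq|S'|$ a.e. The boundary conditions \eqref{eq:planar-bc} are preserved because of the bound $S\leq\frac12$ in \eqref{eq:box-bounds}, and the same holds for $T$. I do not expect to compare $W$ at the two pairs directly. A cleaner route is a cut-and-paste argument.

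Suppose $S'(z_1)<0$ at some point. Then there is a level $c\in(0,\frac12)$ and points $a<b$ with $S(a)=S(b)=c$ and $S>c$ on $(a,b)$. Replacing $S$ by $\max(S,\cdot)$ alone does not clearly lower $W$, because the term $-18ST^2$ mixes the two variables. I would therefore try the truncation $\widehat S=\min(S,c)$ on $(a,b)$.

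\textbf{Main obstacle.} The difficulty is to show $W(\widehat S,T)\leq W(S,T)$ there. We have $\partial_SW=2S+6S^2-18T^2+4S(S^2+3T^2)$, whose sign is not fixed. If the direct comparison fails, I would replace it with a maximum-principle argument on $u=S'$ and $v=T'$. Differentiating \eqref{eq:EL} gives a linear system
\[
\alpha u''=(1+6S+6S^2+6T^2)u+(-18T+12ST)v,\quad v''=(-6T+4ST)u+(1-6S+2S^2+18T^2)v.
\]
Its off-diagonal coefficients are $-6T(3-2S)\leq0$ and $-2T(3-2S)\leq0$ for $0\le S\le \frac12$, so the system is cooperative.

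Next I would use the decay \eqref{eq:profile-decay} and the linearization at the two endpoints. At $(0,0)$ the linearized matrix is $\mathrm{diag}(1/\alpha,1)$. At $(\frac12,\frac12)$ it has positive eigenvalues. The unstable manifold at $-\infty$ and the stable manifold at $+\infty$ then force $u,v$ to have fixed sign near $\pm\infty$. Since $S,T>0$ and the limits are approached from below (by \eqref{eq:box-bounds}), that sign is positive.

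On an interval where $u$ or $v$ becomes negative, I would test the quadratic form of the cooperative system against $(u^-,v^-)$. The target is to show that minimality (non-negativity of the second variation of $\mathcal E_L$) together with the sign structure forces $u^-=v^-=0$. The translation mode $(S',T')$ spans the kernel, and a cooperative system with a positive ground state cannot have a sign-changing kernel element. So I expect this to yield $u,v\geq0$. Making the ground-state / Perron--Frobenius argument for cooperative systems on $\R$ rigorous is the step I expect to be hardest.

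\textbf{Step 2: strictness.} Once $u,v\geq0$, suppose $u(z_0)=0$. Then $u'(z_0)=0$ and $u''(z_0)\geq0$, while the equation gives
\[
\alpha u''(z_0)=-6T(3-2S)v(z_0)\leq0.
\]
Hence $v(z_0)=0$ as well, and then also $v'(z_0)=0$. The pair $(u,v)$ solves the linear system with zero Cauchy data, so uniqueness gives $u=v\equiv0$. That would make $(S,T)$ constant, contradicting \eqref{eq:planar-bc}.

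If instead $v(z_0)=0$, then $v'(z_0)=0$ and $v''(z_0)\geq0$, while the equation gives $v''(z_0)=-2T(3-2S)u(z_0)\leq0$. Since $T>0$ by \eqref{eq:box-bounds}, this forces $u(z_0)=0$, and we return to the previous case. Together these give $S'>0$ and $T'>0$ on $\R$.
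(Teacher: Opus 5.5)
\textbf{Step 2 is fine; Step 1 has a genuine gap.} Your Step~2 is correct. Once $u=S'\ge0$ and $v=T'\ge0$ are known, a zero of $u$ forces $v=0$ there, because $B=-6T(3-2S)<0$. The linear Jacobi system with zero Cauchy data then gives $u=v\equiv0$. The paper instead invokes the already established fact that $u,v$ have no common zero, which amounts to the same thing.

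The gap is Step~1, which carries the actual content of the proposition: $u,v\ge0$ is never proved.
\begin{itemize}
\item You abandon the truncation argument yourself. That is right, since nothing makes $W(\min(S,c),T)\le W(S,T)$.
\item The fixed sign of $u,v$ near $\pm\infty$ is asserted rather than derived. Approaching the limits from below does not by itself control the sign of the derivatives.
\item Even granting those signs, cooperativity alone gives no maximum principle on intermediate intervals. That would need a positivity property, for instance a positive principal eigenvalue, which is exactly what must come from minimality.
\item Your Perron--Frobenius claim, that $(S',T')$ spans the kernel and that a cooperative system cannot have a sign-changing kernel element, is essentially the statement to be proved. You leave it as an expectation.
\end{itemize}

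Testing against $(u^-,v^-)$ does not by itself force $u^-=v^-=0$. Write $\mathcal J[\xi,\eta]=\int_\R\{\alpha(\xi')^2+3(\eta')^2+A\xi^2+2B\xi\eta+3D\eta^2\}\,\mathrm dz$ for the second variation. Because $B=3C$, the Jacobi system is the Euler--Lagrange system of $\mathcal J$, and $(u,v)$ solves it. This gives
\[
\mathcal J[u^-,v^-]=\int_\R B\,(u^+v^-+u^-v^+)\,\mathrm dz\le0 .
\]
Minimality gives $\mathcal J\ge0$, extended to $H^1$ pairs by cutoff. Together these yield only $u^+v^-=u^-v^+=0$, that is, $uv\ge0$.

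\textbf{Two ways to close the gap.} The paper passes from $uv\ge0$ to $u,v\ge0$ as follows.
\begin{itemize}
\item If $u$ or $v$ changed sign, then $uv\ge0$ together with $B,C<0$ in the Jacobi system produces a common zero $z_*$ of $u$ and $v$.
\item The equipartition identity $\alpha(S')^2+3(T')^2=W(S,T)$ then gives $W(S,T)(z_*)=0$. Hence $(S,T)(z_*)\in\{(0,0),(\frac12,\frac12)\}$ with zero velocity.
\item Uniqueness for the Cauchy problem forces a constant solution, contradicting \eqref{eq:planar-bc}.
\item Finally, $\int_\R u=\int_\R v=\frac12$ fixes the sign as positive.
\end{itemize}

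You could also finish your own route without equipartition.
\begin{itemize}
\item Since $\mathcal J\ge0$ and $\mathcal J[u^-,v^-]=0$, the pair $(u^-,v^-)$ lies in the kernel of $\mathcal J$: expand $0\le\mathcal J[(u^-,v^-)+t(\phi,\psi)]$ in $t$.
\item Hence $(u^-,v^-)$ is a $C^2$ solution of the Jacobi system.
\item On an interval where $u>0$, which exists because $\int_\R u=\frac12$, you have $u^-\equiv0$. The first equation then gives $Bv^-=0$, so $v^-=0$ there.
\item ODE uniqueness gives $(u^-,v^-)\equiv0$.
\end{itemize}
Without one of these completions, the proposal does not establish the monotonicity.
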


\begin{proof}
Set $u=S'$ and $v=T'$, and define
\begin{equation}\label{eq:Jacobi-coefficients}
\begin{aligned}
A&=1+6S+6S^2+6T^2,\quad B=6T(2S-3),\\
C&=2T(2S-3),\quad D=1-6S+2S^2+18T^2.
\end{aligned}
\end{equation}
Differentiating \eqref{eq:EL} gives the Jacobi system
\begin{equation}\label{eq:Jacobi}
\alpha u''=Au+Bv,\quad v''=Cu+Dv,
\end{equation}
where $B=3C<0$ by \eqref{eq:box-bounds}.  By
\eqref{eq:reduced-energy}, the second variation in the diagonal direction
$(\xi,\eta)$ is
\begin{equation}\label{eq:diagonal-Jacobi-form}
\begin{aligned}
\mathcal J[\xi,\eta]
&:=\frac{1}{2}\left.\frac{\mathrm{d}^2}{\mathrm{d}\varepsilon^2}
\mathcal E_L(S+\varepsilon\xi,T+\varepsilon\eta)
\right|_{\varepsilon=0}\\
&=\int_\R
\{\alpha(\xi')^2+3(\eta')^2+A\xi^2+2B\xi\eta+3D\eta^2\}\mathrm{d} z.
\end{aligned}
\end{equation}
For $(\xi,\eta)\in C_0^\infty(\R)\times C_0^\infty(\R)$ and sufficiently
small $|\varepsilon|$,
\[
(S+\varepsilon\xi,T+\varepsilon\eta)\in\mathcal A_{\mathrm p}.
\]
Hence \eqref{eq:diagonal-minimizer} gives
\[
\mathcal J[\xi,\eta]\geq0,
\quad (\xi,\eta)\in C_0^\infty(\R)\times C_0^\infty(\R).
\]

Define
\[
u_+:=\max\{u,0\},\quad u_-:=\max\{-u,0\},\quad
v_+:=\max\{v,0\},\quad v_-:=\max\{-v,0\}.
\]
By \eqref{eq:profile-decay}, $u,v\in H^1(\R)$.  Since the positive- and
negative-part maps are Lipschitz,
\[
(u_+)'=\boldsymbol{1}_{\{u>0\}}u',\quad
(u_-)'=-\boldsymbol{1}_{\{u<0\}}u',
\]
and the analogous identities for $v_\pm$.  Hence
\[
u_\pm,v_\pm\in H^1(\R).
\]
If
$\zeta_R\in C_0^\infty(\R)$ satisfies
\[
0\leq\zeta_R\leq1,\quad \zeta_R=1\ \text{on }[-R,R],\quad
|\zeta_R'|\leq\frac{C}{R},
\]
then \eqref{eq:profile-decay} gives
\[
\mathcal J[\zeta_Ru_+,\zeta_Rv_+]
\to\mathcal J[u_+,v_+],\quad R\to+\infty.
\]
Here the potential terms converge by \eqref{eq:Jacobi-coefficients} and
\eqref{eq:box-bounds}, which give $A,B,D\in L^\infty(\R)$.
Consequently $\mathcal J[u_+,v_+]\geq0$.

Using \eqref{eq:profile-decay} to justify integration by parts, test the two
equations in \eqref{eq:Jacobi} by $u_+$ and $3v_+$ and compare with
\eqref{eq:diagonal-Jacobi-form} yields
\begin{equation}\label{eq:positive-parts}
\mathcal J[u_+,v_+]
=-\int_\R B(vu_++uv_+-2u_+v_+)\mathrm{d} z.
\end{equation}
By the definitions of $u_\pm$ and $v_\pm$, the bracket in
\eqref{eq:positive-parts} satisfies
\[
vu_++uv_+-2u_+v_+=-u_+v_--u_-v_+\leq0.
\]
Since $B<0$, \eqref{eq:positive-parts} and
$\mathcal J[u_+,v_+]\geq0$ imply
\[
0\leq\mathcal J[u_+,v_+]
=\int_\R B(u_+v_-+u_-v_+)\mathrm{d} z\leq0.
\]
Because $B<0$ at any finite point by \eqref{eq:box-bounds},
$u_+v_-=u_-v_+=0$ follows from continuity, and hence
\begin{equation}\label{eq:uv-positive}
uv\geq0,\quad z\in\R.
\end{equation}

Let
\[
(f,g,\gamma,\beta,\rho)
\in\{(u,v,\alpha,B,A),(v,u,1,C,D)\}.
\]
Equations \eqref{eq:Jacobi}, \eqref{eq:Jacobi-coefficients},
\eqref{eq:box-bounds}, and \eqref{eq:uv-positive} give
\[
\gamma f''=\rho f+\beta g,
\quad fg\geq0,
\quad \gamma>0,
\quad \beta<0.
\]
Suppose that $f$ changes sign.  Replacing $(f,g)$ by $(-f,-g)$ if
necessary, choose $a<b$ such that $f(a)<0<f(b)$, and define
\[
z_-:=\sup\{z\in[a,b]\colon f(z)<0\},\quad
z_+:=\inf\{z\in[z_-,b]\colon f(z)>0\}.
\]
Then
\[
f=0\quad\text{on }[z_-,z_+].
\]
If $z_-=z_+=z_*$, there exist $x_n,y_n\to z_*$ such that
\[
f(x_n)<0<f(y_n),\quad
g(x_n)\leq0\leq g(y_n),
\]
where the second pair of inequalities follows from $fg\geq0$.  Hence
$g(z_*)=0$ by continuity.  If $z_-<z_+$, then, for any
$z_*\in(z_-,z_+)$,
\[
0=\gamma f''(z_*)=\beta(z_*)g(z_*),
\quad \beta(z_*)<0,
\]
so $g(z_*)=0$.  Therefore,
\begin{equation}\label{eq:common-zero}
u\ \text{or}\ v\ \text{changes sign}
\quad\Longrightarrow\quad
u(z_*)=v(z_*)=0\quad\text{for some }z_*\in\R.
\end{equation}
Assume the left-hand side of \eqref{eq:common-zero}.  Lemma~\ref{lem:equipartition}
and \eqref{eq:equipartition} give
$W(S(z_*),T(z_*))=0$.  Since
$W=3f_B(\mathbf Q^d(S,T))$, \eqref{eq:bulk-wells} gives
\[
\mathbf n\otimes\mathbf n-\frac{1}{3}\mathbf I\ \text{diagonal}
\quad\Longleftrightarrow\quad
n_in_j=0\ \text{for }i\neq j
\quad\Longleftrightarrow\quad
\mathbf n\in\{\pm\mathbf e_1,\pm\mathbf e_2,\pm\mathbf e_3\}.
\]
Using \eqref{planar},
\[
\begin{aligned}
\mathbf Q^d(S,T)=0
&\quad\Longleftrightarrow\quad(S,T)=(0,0),\\
\mathbf Q^d(S,T)=\mathbf e_1\otimes\mathbf e_1-\frac{1}{3}\mathbf I
&\quad\Longleftrightarrow\quad
(S,T)=\left(\frac{1}{2},\frac{1}{2}\right),\\
\mathbf Q^d(S,T)=\mathbf e_2\otimes\mathbf e_2-\frac{1}{3}\mathbf I
&\quad\Longleftrightarrow\quad
(S,T)=\left(\frac{1}{2},-\frac{1}{2}\right),\\
\mathbf Q^d(S,T)=\mathbf e_3\otimes\mathbf e_3-\frac{1}{3}\mathbf I
&\quad\Longleftrightarrow\quad(S,T)=(-1,0).
\end{aligned}
\]
Therefore,
\[
\{W=0\}\cap[0,+\infty)^2
=\left\{(0,0),\left(\frac{1}{2},\frac{1}{2}\right)\right\},
\]
and
\begin{equation}\label{eq:equilibrium-cauchy-data}
(S,T,S',T')(z_*)
\in\left\{(0,0,0,0),
\left(\frac{1}{2},\frac{1}{2},0,0\right)\right\}.
\end{equation}
For $\mathbf Y=(S,T,u,v)$, equation \eqref{eq:EL} is
\[
\mathbf Y'=\mathbf F(\mathbf Y),
\]
where
\[
\mathbf F(S,T,u,v)
=\left(
u,v,
\frac{S+3S^2-9T^2+2S(S^2+3T^2)}{\alpha},
T-6ST+2T(S^2+3T^2)
\right).
\]
Since $\alpha>0$, $\mathbf F\in C^\infty(\R^4;\R^4)$ is locally
Lipschitz.  Direct substitution into the formula for $\mathbf F$ gives
\[
\mathbf F(0,0,0,0)=0,\quad
\mathbf F\left(\frac{1}{2},\frac{1}{2},0,0\right)=0.
\]
Hence the two constant functions with values in
\eqref{eq:equilibrium-cauchy-data} solve \eqref{eq:EL}.  The uniqueness of
the Cauchy problem with the initial data \eqref{eq:equilibrium-cauchy-data}
gives
\[
\mathbf Y(z)\equiv(0,0,0,0)
\quad\text{or}\quad
\mathbf Y(z)\equiv\left(\frac{1}{2},\frac{1}{2},0,0\right),
\quad z\in\R,
\]
contradicting \eqref{eq:planar-bc}. Thus, $u$ and $v$ cannot
vanish simultaneously, and \eqref{eq:common-zero} shows that neither
function changes sign.  Moreover,
\eqref{eq:profile-decay} and \eqref{eq:planar-bc} give
\begin{align*}
\int_\R u\,\mathrm{d} z
&=S(+\infty)-S(-\infty)=\frac{1}{2},\\
\int_\R v\,\mathrm{d} z
&=T(+\infty)-T(-\infty)=\frac{1}{2}.
\end{align*}
Consequently $u,v\geq0$.

Finally, if $u(z_0)=0$, then $u''(z_0)\geq0$, whereas the first equation in
\eqref{eq:Jacobi} gives $\alpha u''(z_0)=B(z_0)v(z_0)\leq0$.  Equality would
force $v(z_0)=0$, already excluded.  Thus $u>0$.  If $v(z_0)=0$, then
$v''(z_0)\geq0$, whereas the second equation in \eqref{eq:Jacobi} gives
$v''(z_0)=C(z_0)u(z_0)<0$.  Hence $v>0$ as well.
\end{proof}

The following upper bound on $T-S$ is used in
Lemma~\ref{lem:derivative-order}.

\begin{lemma}\label{lem:gap}
Suppose $0<\alpha<1$.  Then any non-negative diagonal planar minimizer in
Proposition~\ref{prop:existence} satisfies
\begin{equation}\label{eq:gap-bound}
T(z)-S(z)<\frac{1}{6},\quad z\in\R.
\end{equation}
\end{lemma}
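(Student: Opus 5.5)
The plan is a maximum-principle argument for the gap $w:=T-S$, using the Euler--Lagrange system \eqref{eq:EL} at a maximum point of $w$. By \eqref{eq:planar-bc}, $w(z)\to0$ as $z\to\pm\infty$. Suppose, for contradiction, that $w\geq\frac16$ somewhere. Then $w$ attains a global maximum $m:=\max_\R w\geq\frac16$ at some $z_0\in\R$. At $z_0$ we have $w''(z_0)\leq0$, that is, $T''(z_0)\leq S''(z_0)$.

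Write the right-hand sides of \eqref{eq:EL} as
\[
g:=S+3S^2-9T^2+2S(S^2+3T^2),\qquad h:=T-6ST+2T(S^2+3T^2),
\]
so that $\alpha S''=g$ and $T''=h$.

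\textbf{Step 1: $h>g$ at $z_0$.} Substituting $T=S+m$ gives
\[
h-g=(T-S)-6ST-3S^2+9T^2+2(T-S)(S^2+3T^2)=m+12Sm+9m^2+2m(S^2+3T^2).
\]
By \eqref{eq:box-bounds} and $m>0$, this is strictly positive. Hence $h(z_0)>g(z_0)$.

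\textbf{Step 2: $g<0$ on the relevant region.} We show $g<0$ at $z_0$, using only $0<S$, $T\leq\frac12$ and $T\geq S+\frac16$; these force $S\in(0,\frac13]$. Bound $2S(S^2+3T^2)\leq 2S^3+6ST^2$. Then use $T\geq S+\frac16$ together with $9-6S>0$ to get
\[
g\leq S+3S^2+2S^3-(9-6S)\Bigl(S+\frac16\Bigr)^2=:\phi(S).
\]
It then remains to check the one-variable polynomial inequality $\phi<0$ on $[0,\frac13]$. For instance, $\phi(0)=-\frac14$, and at $S=\frac13$ the value is $\frac23+\frac2{27}-\frac74<0$. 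A crude monotonicity or term-by-term estimate on this short interval should finish it.

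\textbf{Step 3: conclusion.} This is the only place where $0<\alpha<1$ enters. Since $g(z_0)<0$ and $\frac1\alpha>1$, we get
\[
S''(z_0)=\frac{g(z_0)}{\alpha}<g(z_0)<h(z_0)=T''(z_0),
\]
which contradicts $T''(z_0)\leq S''(z_0)$. Therefore $w<\frac16$ everywhere, which is \eqref{eq:gap-bound}.

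The main obstacle is Step 2, the polynomial sign check. It is elementary, but it must hold uniformly on the region cut out by \eqref{eq:box-bounds} and $T-S\geq\frac16$. If the crude bound $\phi$ turns out to be too lossy near $S=\frac13$, I would instead keep the exact expression for $g$ with $T=S+m$ and maximize it over the compact triangle $\{S\geq0,\ T\leq\frac12,\ T-S\geq\frac16\}$, checking its vertices and edges.
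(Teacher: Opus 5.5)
Your proposal is correct and is essentially the paper's proof in contrapositive form. The paper also works at the positive maximum of $T-S$ and uses the subtracted equation $(T-S)''-c\,(T-S)=(\alpha-1)S''$ (your Step 1 is exactly $h-g=c\,(T-S)$) to force $S''>0$ there, then rules out $T-S\ge\frac16$ with the same cubic inequality. Your Step 2 closes at once from the identity $(9-6r)(r+\frac16)^2-(r+3r^2+2r^3)=4r^2(1-2r)+\frac{11}{6}r+\frac14>0$ on $[0,\frac12]$, which gives your polynomial $\phi\le-\frac14$ there.
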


\begin{proof}
Put $\phi=T-S$.  A direct subtraction of the equations in
\eqref{eq:EL} gives
\begin{equation}\label{eq:phi-equation}
\phi''-c\phi=(\alpha-1)S'',
\quad c=1+3S+9T+2S^2+6T^2.
\end{equation}
Equation~\eqref{eq:box-bounds} gives $c>0$.
If $\sup\phi\leq0$ there is nothing to prove.  Otherwise the positive
maximum is attained at some finite $z_m$.  Indeed, \eqref{eq:planar-bc}
gives
\[
\lim_{z\to-\infty}\phi=0,\quad
\lim_{z\to+\infty}\phi=0.
\]
At $z_m$, $\phi''(z_m)\leq0$, so \eqref{eq:phi-equation} implies
\[
(1-\alpha)S''(z_m)\geq c(z_m)\phi(z_m)>0.
\]
Thus $S''(z_m)>0$.  By the first equation in \eqref{eq:EL},
\[
0<\alpha S''(z_m)
=S(z_m)+3S(z_m)^2+2S(z_m)^3
-(9-6S(z_m))T(z_m)^2,
\]
and \eqref{eq:box-bounds} therefore gives
\[
T(z_m)<T_s(S(z_m)),\quad
T_s(r)=\sqrt{\frac{r+3r^2+2r^3}{9-6r}}.
\]
For $0\leq r\leq\frac{1}{2}$ one has $T_s(r)<r+\frac{1}{6}$.  Indeed,
\begin{align*}
&(9-6r)\left(r+\frac{1}{6}\right)^2
-(r+3r^2+2r^3)\\
&\quad=4r^2(1-2r)+\frac{11}{6}r+\frac{1}{4}>0.
\end{align*}
Therefore $\phi(z_m)<\frac{1}{6}$, which proves \eqref{eq:gap-bound}.
\end{proof}

\section{The critical second-variation channel}\label{sec:spectral}

Let $(S,T)\in\mathcal A_{\mathrm p}$ be a non-negative diagonal planar
minimizer, and set $\mathbf Q_p=\mathbf Q^d(S,T)$.  Consider the
off-diagonal perturbation
\begin{equation}\label{eq:Pp}
\mathbf P_p(z)=\begin{pmatrix}0&0&p(z)\\0&0&0\\p(z)&0&0\end{pmatrix}.
\end{equation}
Expanding the definition \eqref{eq:second-variation-definition} using
\eqref{eq:intro-one-dimensional-energy} gives
\begin{align}\label{eq:general-second-variation}
E_{\mathbf Q}(\mathbf P,\mathbf P)=\int_\R\biggl\{&\frac{1}{6}|\mathbf P'|^2
+\frac{L}{6}\sum_{i=1}^3((\mathbf P')_{i3})^2+\frac{1}{6}|\mathbf P|^2
-3\tr(\mathbf P^2\mathbf Q)+(\mathbf Q:\mathbf P)^2+\frac{1}{2}|\mathbf Q|^2|\mathbf P|^2\biggr\}\mathrm{d} z.
\end{align}
Substitution of \eqref{eq:Pp} and $\mathbf Q=\mathbf Q_p$ into
\eqref{eq:general-second-variation} gives
\begin{equation}\label{eq:I31}
E_{\mathbf Q_p}(\mathbf P_p,\mathbf P_p)=\frac{1}{3}\mathfrak q_L[p],
\end{equation}
where $\mathfrak q_L$, $V_L$, and $\mathscr L_L$ are defined in
\eqref{eq:critical-form}--\eqref{eq:critical-operator}.  Since
$L>-\frac{3}{2}$, \eqref{eq:VL} gives $a_L>0$.  Proposition~\ref{prop:existence}
and \eqref{eq:planar-bc} give
\[
V_L(-\infty)=1,\quad V_L(+\infty)=0.
\]
Set
\[
V_{\mathrm{step}}(z):=\boldsymbol{1}_{(-\infty,0)}(z),\quad
\mathscr L_{\mathrm{step}}
:=-a_L\frac{\mathrm{d}^2}{\mathrm{d} z^2}+V_{\mathrm{step}}(z).
\]
Equations \eqref{eq:VL} and \eqref{eq:profile-decay} give
\[
\lim_{z\to-\infty}(V_L-V_{\mathrm{step}})=0,\quad
\lim_{z\to+\infty}(V_L-V_{\mathrm{step}})=0.
\]
Multiplication by $V_L-V_{\mathrm{step}}$ is compact from $H^2(\R)$ to
$L^2(\R)$ by local Rellich compactness and
\begin{equation}\label{eq:compact-perturbation-estimate}
\|(V_L-V_{\mathrm{step}})u\|_{L^2(\R\backslash[-R,R])}
\leq\|V_L-V_{\mathrm{step}}\|_{L^\infty(\R\backslash[-R,R])}
\|u\|_{L^2}.
\end{equation}
The Kato--Rellich theorem \cite[Theorem 6.4]{Teschl2014} gives
\[
\mathcal D(\mathscr L_{\mathrm{step}})=H^2(\R),\quad
\mathscr L_{\mathrm{step}}=\mathscr L_{\mathrm{step}}^*.
\]
For $u\in H^1(\R)$,
\[
\mathfrak q_{\mathrm{step}}[u]
:=a_L\int_\R|u'|^2\mathrm{d} z
+\int_{-\infty}^0|u|^2\mathrm{d} z\geq0.
\]
If
$\chi\in C_0^\infty((1,2))$, $\|\chi\|_{L^2}=1$, and $k\geq0$, then
\[
\psi_n(z):=n^{-\frac{1}{2}}\chi\left(\frac{z-3n}{n}\right)\exp(\mathrm{i}kz)
\]
is supported in $(4n,5n)$ and satisfies
\[
\|\psi_n\|_{L^2}=1,\quad \psi_n\rightharpoonup0,\quad
\|(\mathscr L_{\mathrm{step}}-a_Lk^2)\psi_n\|_{L^2}\to0,
\quad n\to+\infty.
\]
Weyl's criterion \cite[Lemma 6.17]{Teschl2014} gives
\[
\sigma_{\ess}(\mathscr L_{\mathrm{step}})=[0,+\infty).
\]
The perturbation $V_L-V_{\mathrm{step}}$ is relatively compact by
\eqref{eq:compact-perturbation-estimate}, so Weyl's theorem
\cite[Theorem 6.19]{Teschl2014} gives
\begin{equation}\label{eq:ess-spectrum}
\sigma_{\ess}(\mathscr L_L)=[0,+\infty).
\end{equation}
Here $\sigma_{\ess}$ denotes the spectrum after removing isolated
eigenvalues of finite multiplicity.  Consequently, any negative spectral
point of $\mathscr L_L$ is discrete, and \eqref{eq:Lambda} shows that
$\mathfrak q_L$ has a negative direction if and only if $\Lambda(L)<0$.

For comparison with the earlier trial, \eqref{eq:box-bounds} gives $T>0$.
Setting $p=T\eta$ and using the $T$ equation in \eqref{eq:EL} together
with \eqref{eq:VL} gives
\[
V_L=\frac{T''}{T}-9(T-S).
\]
For $\eta\in C_0^\infty(\R)$, integration by parts yields
\[
\int_\R\left(((T\eta)')^2+\frac{T''}{T}(T\eta)^2\right)\mathrm{d} z
=\int_\R T^2(\eta')^2\mathrm{d} z.
\]
Substitution into \eqref{eq:critical-form} gives the exact ground-state
transform
\begin{equation}\label{ground-state}
\mathfrak q_L[T\eta]
=\int_\R\left\{
\frac{L}{2}((T\eta)')^2
+T^2(\eta')^2-9T^2(T-S)\eta^2
\right\}\mathrm{d} z.
\end{equation}
For $\eta=(T-S)_+:=\max\{T-S,0\}$, \eqref{eq:profile-decay} gives
$T\eta\in H^1(\R)$, and
\[
\int_\R T^2((\eta')^2-9(T-S)\eta^2)\mathrm{d} z
=\int_{\{T>S\}}T^2((T'-S')^2-9(T-S)^3)\mathrm{d} z.
\]
Since $L<0$, the first term in \eqref{ground-state} is non-positive.
Thus, \eqref{eq:old-condition} tests only $p=T(T-S)_+$; smooth compact
cutoffs are justified by \eqref{eq:profile-decay}.  Formula
\eqref{eq:Lambda}, not this single substitution, is the variationally
optimal quantity.

\section{A rotational negative mode for all \texorpdfstring{$L<0$}{L<0}}
\label{sec:instability}

\begin{lemma}\label{lem:derivative-order}
Suppose $0<\alpha<1$.  Then
\begin{equation}\label{eq:derivative-order}
3T'(z)-S'(z)>0,\quad z\in\R.
\end{equation}
\end{lemma}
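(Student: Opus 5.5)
The plan is a maximum-principle argument for $w:=3T'-S'=3v-u$, with $u=S'$ and $v=T'$ as in the proof of Proposition~\ref{prop:monotone}. First I would show $w>0$ near both ends. Then I would show that $w$ cannot attain a non-positive minimum at a finite point. The tools are the Jacobi system \eqref{eq:Jacobi}, the positivity $u,v>0$ from Proposition~\ref{prop:monotone}, the box bounds \eqref{eq:box-bounds}, and the gap bound of Lemma~\ref{lem:gap}.

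\emph{Step 1 (tails).} Near $z=-\infty$ the system \eqref{eq:EL} linearizes to $\alpha S''=S$, $T''=T$. Since $0<\alpha<1$, the decay rate of $S$ is $1/\sqrt\alpha>1$, which is strictly faster than the rate $1$ of $T$. Using \eqref{eq:profile-decay} and a standard stable-manifold argument, I would show $T'\sim c\,e^{z}$ with $c>0$ and $S'=o(e^{z})$. Hence $w>0$ for $z\ll0$. Near $z=+\infty$ I would linearize at $(\tfrac12,\tfrac12)$, using $A,B,C,D$ evaluated there, compute the slowest decaying eigenvector of the $2\times2$ problem, and check that its components satisfy $3v-u>0$. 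Positivity of $u,v$ fixes the sign of the eigenvector. If the linearization turns out degenerate, I would instead use the first integral \eqref{eq:equipartition} to relate $S'$ and $T'$ asymptotically.

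\emph{Step 2 (interior).} By Step 1, if $w\le0$ somewhere then $w$ attains a minimum $w(z_m)\le0$ at some finite $z_m$, with $w'(z_m)=0$ and $w''(z_m)\ge0$. Substituting $u=3v-w$ into \eqref{eq:Jacobi} gives
\[
w''=3v''-u''=\Bigl(9C+3D-\frac{3A+B}{\alpha}\Bigr)v-\Bigl(3C-\frac{A}{\alpha}\Bigr)w .
\]
At $z_m$ we have $w\le0$. Since $C<0$ and $A>0$, the coefficient $-(3C-A/\alpha)$ is positive, so the second term is $\le0$. For the first term I would prove the pointwise inequality
\[
3A+B>0\quad\text{and}\quad 9C+3D<3A+B\le\frac{3A+B}{\alpha}
\quad\text{on }\R.
\]
The last step uses $\alpha<1$. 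The first inequality is
\[
3A+B=3+18S+18S^2+18T^2+12ST-18T=3+18(S-T)+18S^2+18T^2+12ST,
\]
which is positive by Lemma~\ref{lem:gap}: $T-S<\tfrac16$ gives $18(S-T)>-3$. The second reduces to a polynomial inequality in $(S,T)\in(0,\tfrac12]^2$:
\[
(3A+B)-(9C+3D)=18(S+T)+12S^2-36T^2+48ST+36T>0 .
\]
To prove it I would use $T<S+\tfrac16$ together with $T\le\tfrac12$ to absorb the $-36T^2$ term. With $v(z_m)>0$, these give $w''(z_m)<0$, contradicting $w''(z_m)\ge0$. Hence $w>0$ on $\R$.

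\emph{Main obstacle.} I expect the difficulty to lie in the sign bookkeeping for the coefficient of $v$ in the display for $w''$. The dangerous term $-36T^2$ is only controlled in the region where the gap bound is effective, which is exactly why Lemma~\ref{lem:gap} was proved with the specific constant $\tfrac16$. If the crude polynomial bound fails in a corner such as $S$ small and $T$ near $\tfrac16$, I would refine it by first establishing $T\le$ a function of $S$ from the first equation of \eqref{eq:EL}, as in the proof of Lemma~\ref{lem:gap}. A secondary technical point is the $+\infty$ tail in Step 1, where the sign of the leading eigenvector must be checked explicitly.
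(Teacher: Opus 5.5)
Your Step~2 is the paper's proof up to a sign convention. The paper uses $w=S'-3T'$ and argues at a positive maximum, with $w''-c_0w=F_0T'$, $c_0=A/\alpha-3C>0$, $F_0=(3A+B)/\alpha-9C-3D>0$. Your handling of a non-positive minimum gives $w>0$ in one pass, where the paper first shows $w\le0$ and then proves strictness separately. Two corrections are needed.

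\textbf{Step~1 is unnecessary.} By \eqref{eq:profile-decay}, $w\to0$ at $\pm\infty$. So if $w(\bar z)\le0$, either $\inf w<0$, which is then attained on a compact interval, or $\inf w=0=w(\bar z)$. In both cases there is a finite minimum point with $w(z_m)\le0$. The stable-manifold and eigenvector analysis can be dropped; the paper uses only these limits.

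\textbf{Your expansion of the key polynomial is wrong.} The correct value is
\[
(3A+B)-(9C+3D)=36S+36T+12S^2-36T^2-24ST=12\bigl(S(3-2T)+S^2+3T(1-T)\bigr).
\]
This is positive on $(0,\tfrac12]^2$ using only $T\le\tfrac12$: the terms $3-2T$ and $1-T$ are then positive. The $-36T^2$ term is absorbed by $36T$, so Lemma~\ref{lem:gap} is not needed there. It is needed only for $3A+B>0$, which is what makes $(3A+B)/\alpha>3A+B$ when $\alpha<1$. Your worry about corner cases such as small $S$ with $T$ near $\tfrac16$ therefore does not arise.
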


\begin{proof}
Use the coefficients $A,B,C,D$ defined in
\eqref{eq:Jacobi-coefficients}, and put
\[
w=S'-3T'.
\]
From the differentiated system \eqref{eq:Jacobi}, after substituting
$S'=w+3T'$, we obtain
\begin{equation}\label{eq:w-equation}
w''-c_0w=F_0T',
\end{equation}
where
\[
c_0=\frac{A}{\alpha}-3C,
\quad
F_0=\frac{3A+B}{\alpha}-9C-3D.
\]
By \eqref{eq:Jacobi-coefficients} and \eqref{eq:box-bounds},
\[
A>0,\quad C<0,
\]
so $c_0>0$.

It remains to prove $F_0>0$.  Lemma~\ref{lem:gap} yields
$T-S<\frac{1}{6}$, and hence
\begin{align*}
3A+B
&=3(1+6S+6S^2+6T^2+4ST-6T)\\
&=3((1+6S-6T)+6S^2+6T^2+4ST)>0.
\end{align*}
Because $0<\alpha<1$,
\begin{align*}
F_0
&>(3A+B)-(9C+3D)\\
&=12(S(3-2T)+S^2+3T(1-T))>0,
\end{align*}
where \eqref{eq:box-bounds} was used in the last step.
Thus,
\begin{equation}\label{eq:w-coefficient-signs}
c_0>0,\quad F_0>0,\quad z\in\R.
\end{equation}

By \eqref{eq:profile-decay},
\[
\lim_{z\to-\infty}w(z)=0,\quad
\lim_{z\to+\infty}w(z)=0.
\]
Assume that $w(\bar z)>0$ for some $\bar z\in\R$, and set
\[
M:=\sup_{z\in\R}w(z)\geq w(\bar z)>0.
\]
Proposition~\ref{prop:existence} and \eqref{eq:profile-decay} give
$M<+\infty$ and $R>|\bar z|$ such that
\begin{equation}\label{eq:w-tail-bound}
|w(z)|<\frac{M}{2},\quad |z|\geq R.
\end{equation}
Since $w$ is continuous by Proposition~\ref{prop:existence}, it attains its
maximum on $[-R,R]$.  By the definition of $M$ and
\eqref{eq:w-tail-bound},
\[
M=\max_{z\in[-R,R]}w(z).
\]
Equation~\eqref{eq:w-tail-bound} gives $w(-R)<M$ and $w(R)<M$.  Hence there
exists $z_m\in(-R,R)$ such that
\[
w(z_m)=M,
\quad w'(z_m)=0,
\quad w''(z_m)\leq0.
\]
Using \eqref{eq:w-coefficient-signs},
\[
w''(z_m)-c_0(z_m)w(z_m)
\leq-c_0(z_m)M<0.
\]
On the other hand, Proposition~\ref{prop:monotone} and
\eqref{eq:w-coefficient-signs} give
\[
F_0(z_m)T'(z_m)>0,
\]
contradicting \eqref{eq:w-equation}.  Hence $w\leq0$ on $\R$.

Assume that $w(z_0)=0$ for some $z_0\in\R$.  Since $w\leq0$, the point
$z_0$ is a global maximum, and
\[
w'(z_0)=0,\quad w''(z_0)\leq0.
\]
Equations \eqref{eq:w-equation} and \eqref{eq:w-coefficient-signs}, together
with Proposition~\ref{prop:monotone}, give
\[
w''(z_0)
=c_0(z_0)w(z_0)+F_0(z_0)T'(z_0)
=F_0(z_0)T'(z_0)>0,
\]
a contradiction.  Therefore $w<0$ on $\R$, which is
\eqref{eq:derivative-order}.
\end{proof}

The scalar test function $p$ in the off-diagonal perturbation
\eqref{eq:Pp} is generated by rotation.  The eigenvalues of $\mathbf Q_p$
along $\mathbf e_1$ and $\mathbf e_3$ are
\[
\lambda_1=\frac{S+3T}{3},
\quad
\lambda_3=-\frac{2S}{3},
\]
and therefore
\[
\lambda_1-\lambda_3=S+T.
\]
Let
\[
\mathbf R_\theta:=
\begin{pmatrix}
\cos\theta&0&-\sin\theta\\
0&1&0\\
\sin\theta&0&\cos\theta
\end{pmatrix}
\]
be the rotation through angle $\theta$ in the
$(\mathbf e_1,\mathbf e_3)$ plane.  Differentiating the rotated tensor
$\mathbf R_\theta \mathbf Q_p\mathbf R_\theta^{\mathrm{T}}$ at $\theta=0$ gives
\[
\left.\frac{\mathrm{d}}{\mathrm{d}\theta}
(\mathbf R_\theta \mathbf Q_p\mathbf R_\theta^{\mathrm{T}})\right|_{\theta=0}
=(S+T)(\mathbf e_1\otimes \mathbf e_3+\mathbf e_3\otimes \mathbf e_1).
\]
Comparing this expression with \eqref{eq:Pp} shows that the scalar mode is
$p=S+T$.  Thus this perturbation is precisely
the infinitesimal rotation of the planar director toward the interface
normal.

\begin{lemma}\label{lem:rotation}
For any solution of \eqref{eq:EL},
\begin{equation}\label{eq:rotation-operator}
\mathscr L_L(S+T)=\frac{L}{6}(S''-3T'').
\end{equation}
For the non-negative diagonal planar minimizer in
Proposition~\ref{prop:existence} with $-\frac{3}{2}<L<0$,
\begin{equation}\label{eq:rotation-energy}
\begin{aligned}
\mathfrak q_L[S+T]
:&=\int_\R\{a_L(S'+T')^2+V_L(S+T)^2\}\mathrm{d} z\\
&=\frac{L}{6}\int_\R(S'+T')(3T'-S')\mathrm{d} z<0.
\end{aligned}
\end{equation}
\end{lemma}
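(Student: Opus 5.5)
The plan is to verify the operator identity \eqref{eq:rotation-operator} by direct algebra using \eqref{eq:EL}, and then obtain \eqref{eq:rotation-energy} by integrating by parts twice on $[-R,R]$ and letting $R\to+\infty$. The limit is justified by the exponential decay \eqref{eq:profile-decay}, and the sign then follows from the derivative inequalities already proved.

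\emph{Step 1: the operator identity.} Set $p=S+T$ and $N=S^2+3T^2$, so that $V_L=1+3S-9T+2N$. Expanding and regrouping gives
\[
V_L(S+T)=\bigl(S+3S^2-9T^2+2NS\bigr)+\bigl(T-6ST+2NT\bigr)=\alpha S''+T'',
\]
where the second equality is exactly \eqref{eq:EL}. Therefore
\[
\mathscr L_L(S+T)=-a_L(S''+T'')+\alpha S''+T''=(\alpha-a_L)S''+(1-a_L)T''.
\]
Since $\alpha-a_L=\frac{2L}{3}-\frac{L}{2}=\frac{L}{6}$ and $1-a_L=-\frac{L}{2}$, this is $\frac{L}{6}(S''-3T'')$. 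The argument uses only \eqref{eq:EL}, so it holds for any solution. Strictly speaking $S+T\notin H^2(\R)$, so the identity is to be read as the differential expression $-a_L\frac{\mathrm d^2}{\mathrm dz^2}+V_L$ applied pointwise.

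\emph{Step 2: the quadratic form.} The form $\mathfrak q_L[S+T]$ converges absolutely. Indeed, $p'=S'+T'$ decays exponentially at both ends by \eqref{eq:profile-decay}. Moreover $V_L$ is bounded, $p\to0$ exponentially at $-\infty$, and $V_L\to V_L(1/2,1/2)=1-3+2=0$ exponentially at $+\infty$, because $V_L$ is a polynomial in $(S,T)$.

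On $[-R,R]$, one integration by parts and Step 1 give
\[
\int_{-R}^{R}\bigl(a_L(p')^2+V_Lp^2\bigr)\,\mathrm dz=\bigl[a_Lpp'\bigr]_{-R}^{R}+\frac{L}{6}\int_{-R}^{R}p\,(S''-3T'')\,\mathrm dz .
\]
A second integration by parts gives
\[
\int_{-R}^{R}p\,(S''-3T'')\,\mathrm dz=\bigl[p(S'-3T')\bigr]_{-R}^{R}+\int_{-R}^{R}(S'+T')(3T'-S')\,\mathrm dz .
\]
All boundary terms involve $p$, which is bounded by \eqref{eq:box-bounds}, multiplied by first derivatives that vanish exponentially by \eqref{eq:profile-decay}. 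Hence they tend to $0$ as $R\to+\infty$. The last integrand is a product of exponentially decaying functions, so it is absolutely integrable. Letting $R\to+\infty$ yields the identity in \eqref{eq:rotation-energy}.

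\emph{Step 3: the sign.} The condition $-\frac32<L<0$ is equivalent to $0<\alpha<1$. Proposition~\ref{prop:monotone} gives $S'+T'>0$, and Lemma~\ref{lem:derivative-order} gives $3T'-S'>0$. The integrand is therefore strictly positive and the integral is positive. Multiplying by $\frac{L}{6}<0$ gives $\mathfrak q_L[S+T]<0$.

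The main point requiring care is the limit in Step 2, because $p$ does not decay at $+\infty$. This is handled by keeping every boundary term in the form $p$ times a derivative, so that only the decay of derivatives is needed. Aside from this, the substantive input is the strict inequality of Lemma~\ref{lem:derivative-order}, which is already available.
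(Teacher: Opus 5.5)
Your proposal is correct and follows the paper's proof essentially step for step. It uses the same identity $V_L(S+T)=\alpha S''+T''$ from \eqref{eq:EL}, the same integration by parts justified by \eqref{eq:profile-decay} with boundary terms of the form bounded $p$ times exponentially decaying first derivatives, and the same sign argument via Proposition~\ref{prop:monotone} and Lemma~\ref{lem:derivative-order}. Your truncation to $[-R,R]$ also makes explicit the vanishing of the second boundary term $p(S'-3T')$, which the paper leaves implicit.
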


\begin{proof}
Adding the two equations in \eqref{eq:EL} and using \eqref{eq:VL} gives
\[
\begin{aligned}
\alpha S''+T''=S+T+3S^2-6ST-9T^2+2S^3+2S^2T+6ST^2+6T^3=V_L(S+T).
\end{aligned}
\]
Since $a_L=1+\frac{L}{2}$ and $\alpha=1+\frac{2L}{3}$,
\begin{align*}
\mathscr L_L(S+T)
&=-a_L(S''+T'')+\alpha S''+T''\\
&=(\alpha-a_L)S''+(1-a_L)T''\\
&=\frac{L}{6}(S''-3T''),
\end{align*}
which proves \eqref{eq:rotation-operator}.  Equation~\eqref{eq:profile-decay}
and the definition \eqref{eq:VL} give
\[
\lim_{z\to-\infty}(S'+T')(z)=0,\quad
\lim_{z\to+\infty}(S'+T')(z)=0,
\]
and
\[
(S'+T')^2+|V_L|(S+T)^2+(S+T)|S''-3T''|\in L^1(\R).
\]
Thus \eqref{eq:rotation-operator} implies
$(S+T)\mathscr L_L(S+T)\in L^1(\R)$ even though $S+T\to1$ at
$z\to+\infty$ by \eqref{eq:planar-bc}.  Moreover,
\eqref{eq:profile-decay} and \eqref{eq:planar-bc} give
\[
\lim_{z\to-\infty}(S+T)(S'+T')=0,\quad
\lim_{z\to+\infty}(S+T)(S'+T')=0.
\]
Integration by parts in \eqref{eq:critical-form} therefore gives
\begin{align*}
\mathfrak q_L[S+T]
&=\int_\R(S+T)\mathscr L_L(S+T)\mathrm{d} z\\
&=-\frac{L}{6}\int_\R(S'+T')(S'-3T')\mathrm{d} z\\
&=\frac{L}{6}\int_\R(S'+T')(3T'-S')\mathrm{d} z.
\end{align*}
By Proposition~\ref{prop:monotone} and \eqref{eq:derivative-order},
\[
(S'+T')(3T'-S')>0,\quad z\in\R.
\]
Since $L<0$, this proves
\eqref{eq:rotation-energy}.
\end{proof}

\begin{proof}[Proof of Theorem~\ref{thm:main}]
By Lemma~\ref{lem:rotation} and \eqref{eq:rotation-energy}, the mode
$p_0=S+T$ satisfies $\mathfrak q_L[p_0]<0$.  By
\eqref{eq:planar-bc},
\[
\lim_{z\to+\infty}p_0(z)=1,
\]
so $p_0\notin L^2(\R)$.  We therefore localize it.  Choose smooth cutoffs
$0\leq\chi_{R,M}\leq1$ such that $\chi_{R,M}=1$ on $[-R,R]$,
$\chi_{R,M}=0$ on $(-\infty,-2R]\cup[R+M,+\infty)$, and
\[
|\chi_{R,M}'|\leq \frac{C}{R}\quad\text{on }[-2R,-R],
\quad
|\chi_{R,M}'|\leq \frac{C}{M}\quad\text{on }[R,R+M].
\]
Expanding \eqref{eq:critical-form} and using
\eqref{eq:critical-operator}, integration by parts gives the exact
localization identity
\[
\mathfrak q_L[p_0\chi_{R,M}]
=\int_\R \chi_{R,M}^2p_0\mathscr L_Lp_0\mathrm{d} z
+a_L\int_\R(\chi_{R,M}')^2p_0^2\mathrm{d} z.
\]
Set $g:=p_0\mathscr L_Lp_0$.  Proposition~\ref{prop:existence} and
\eqref{eq:rotation-operator} give $g\in L^1(\R)$,
$|p_0(z)|\leq C\exp(cz)$ for $z\leq0$, and $\|p_0\|_{L^\infty}\leq C$.
Consequently,
\[
\left|\int_\R(\chi_{R,M}^2-1)g\mathrm{d} z\right|
\leq\int_{\R\backslash[-R,R]}|g|\mathrm{d} z,
\]
and
\[
a_L\int_\R(\chi_{R,M}')^2p_0^2\mathrm{d} z
\leq C a_L\left(\frac{\exp(-2cR)}{R}+\frac{1}{M}\right).
\]
Both right-hand sides tend to zero as $R,M\to+\infty$.  Hence
\[
\lim_{R,M\to+\infty}
\mathfrak q_L[p_0\chi_{R,M}]=\mathfrak q_L[p_0]<0.
\]
Hence some smooth compactly supported $p$ satisfies
$\mathfrak q_L[p]<0$.  With $\mathbf P=\mathbf P_p$ from \eqref{eq:Pp}, equation
\eqref{eq:I31} gives $E_{\mathbf Q_p}(\mathbf P,\mathbf P)<0$.
\end{proof}

An eigenvalue $\lambda$ of $\mathscr L_L$ is called simple if
\begin{equation}\label{eq:simple-eigenvalue-definition}
\dim\{p\in H^2(\R):\mathscr L_Lp=\lambda p\}=1.
\end{equation}

\begin{corollary}
For any $L\in(-\frac{3}{2},0)$, the number $\Lambda(L)<0$ is a simple
eigenvalue of $\mathscr L_L$ with a strictly positive eigenfunction.
\end{corollary}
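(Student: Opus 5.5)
We argue in three steps: show that $\Lambda(L)$ is an eigenvalue, show that its eigenfunctions can be taken positive, and deduce simplicity from the one-dimensional Wronskian argument.

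\emph{Step 1: $\Lambda(L)$ is an eigenvalue.} Fix $L\in(-\frac32,0)$. By Theorem~\ref{thm:main} (through the localized test function built from $p_0=S+T$) there is $p\in C_0^\infty(\R)$ with $\mathfrak q_L[p]<0$. Hence \eqref{eq:Lambda} gives $\Lambda(L)<0$. By \eqref{eq:ess-spectrum}, $\sigma_{\ess}(\mathscr L_L)=[0,+\infty)$, so $\Lambda(L)=\inf\sigma(\mathscr L_L)$ lies below the essential spectrum. It is therefore an isolated eigenvalue of finite multiplicity, and the infimum in \eqref{eq:Lambda} is attained by some $p_*\in H^2(\R)$ with $\mathscr L_Lp_*=\Lambda(L)p_*$.

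\emph{Step 2: positivity.} Since $\mathfrak q_L[|p|]=\mathfrak q_L[p]$ for every $p\in H^1(\R)$ (because $|p|'=\pm p'$ a.e.), the function $|p_*|$ is also a minimizer of the Rayleigh quotient. It is therefore an eigenfunction in $H^2(\R)$; the Euler--Lagrange equation follows by the standard variational argument with first variations in $H^1$. Now $|p_*|\ge0$ solves the linear ODE
\[
a_L u''=(V_L-\Lambda(L))u
\]
with continuous (indeed smooth) coefficients and $a_L>0$. If $|p_*|(z_0)=0$, then $z_0$ is a minimum, so $|p_*|'(z_0)=0$, and ODE uniqueness gives $|p_*|\equiv0$, a contradiction. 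Hence $|p_*|>0$ on $\R$. In particular $p_*$ never vanishes, so it has a fixed sign, and replacing $p_*$ by $-p_*$ if needed gives a strictly positive eigenfunction.

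\emph{Step 3: simplicity.} Let $p_1,p_2\in H^2(\R)$ be two eigenfunctions for $\Lambda(L)$. Their Wronskian $\mathcal W=p_1p_2'-p_1'p_2$ is constant, since $a_L\mathcal W'=p_1(a_Lp_2'')-p_2(a_Lp_1'')=0$. Because $p_i\in H^2(\R)$, both $p_i$ and $p_i'$ lie in $H^1(\R)$ and therefore tend to $0$ at $\pm\infty$. Thus $\mathcal W\equiv0$, so $p_1$ and $p_2$ are linearly dependent as solutions of a second-order linear ODE. This gives $\dim\{p\in H^2(\R):\mathscr L_Lp=\Lambda(L)p\}=1$, which is \eqref{eq:simple-eigenvalue-definition}.

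The only delicate point is attainment of the infimum in Step 1. It rests entirely on \eqref{eq:ess-spectrum} together with the strict inequality $\Lambda(L)<0$; the latter is supplied by Lemma~\ref{lem:rotation} and the cutoff argument. Once this is in place, Steps 2 and 3 are standard Sturm--Liouville arguments.
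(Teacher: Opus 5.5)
Your proof is correct. Step~1 coincides with the paper's argument: $\Lambda(L)<0$ comes from the localized rotation mode, and $\sigma_{\ess}(\mathscr L_L)=[0,+\infty)$ then forces $\Lambda(L)$ to be a discrete eigenvalue.

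The difference is in Steps~2--3. The paper obtains simplicity and positivity together by citing an abstract ground-state theorem (Teschl, Theorem~10.13) for $a_L^{-1}\mathscr L_L=-\frac{\mathrm d^2}{\mathrm dz^2}+a_L^{-1}V_L$, which needs only $V_L\in L^\infty(\R)$. You instead prove both facts by hand from the one-dimensional structure:
\begin{itemize}
\item for positivity, the $|p|$ trick followed by Cauchy uniqueness at an interior zero, where the $C^1$ regularity of the $H^2$ eigenfunction $|p_*|$ gives $|p_*|'(z_0)=0$;
\item for simplicity, the constant Wronskian, which vanishes at $\pm\infty$ because $p_i,p_i'\in H^1(\R)$.
\end{itemize}
Continuity of $V_L$, which you use for classical ODE uniqueness, follows from the smoothness of $(S,T)$ in Proposition~\ref{prop:existence}. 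Even for merely bounded $V_L$, Carath\'eodory uniqueness would suffice.

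Your route is self-contained and proves slightly more: the Wronskian argument shows that every eigenvalue of $\mathscr L_L$ is simple, independently of any positivity. The paper's citation is shorter, and its positivity-based mechanism carries over to higher-dimensional Schr\"odinger operators, where no Wronskian is available.
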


\begin{proof}
Theorem~\ref{thm:main}, \eqref{eq:I31}, and the variational characterization
\eqref{eq:Lambda} give $\Lambda(L)<0$.  Equation~\eqref{eq:ess-spectrum}
and the max--min principle \cite[Theorem 4.12]{Teschl2014} give
\[
\Lambda(L)<\inf\sigma_{\ess}(\mathscr L_L)=0
\quad\Longrightarrow\quad
\Lambda(L)\in\sigma_{\mathrm d}(\mathscr L_L).
\]
Here $\sigma_{\mathrm d}$ denotes the isolated eigenvalues of finite
multiplicity.  Since $V_L\in L^\infty(\R)$ by \eqref{eq:VL} and
Proposition~\ref{prop:existence}, the ground-state theorem
\cite[Theorem 10.13]{Teschl2014} applies to
\[
a_L^{-1}\mathscr L_L
=-\frac{\mathrm{d}^2}{\mathrm{d} z^2}+a_L^{-1}V_L.
\]
Consequently,
\[
\dim\{p\in H^2(\R):\mathscr L_Lp=\Lambda(L)p\}=1,
\]
so $\Lambda(L)$ is simple by \eqref{eq:simple-eigenvalue-definition}.  The
same theorem gives an eigenfunction $p\in H^2(\R)$ satisfying
\[
p(z)>0,\quad z\in\R,
\quad -a_Lp''+V_Lp=\Lambda(L)p\quad\text{in }L^2(\R).
\]
This completes the proof.
\end{proof}

\section{The threshold \texorpdfstring{$L=0$}{L=0}}\label{sec:threshold}

At $L=0$ the planar profile is explicit and the critical operator admits an
exact factorization.  A zero-energy threshold resonance is a non-zero
bounded solution of $\mathscr L_0p=0$ that is not in $L^2(\R)$, when zero is
the bottom of $\sigma_{\ess}(\mathscr L_0)$.

\begin{proposition}[Zero-energy resonance]\label{prop:zero-resonance}
Let $L=0$ and let $s$ be given by \eqref{eq:correct-planar-L0}.  Then
$S=T=s$ and
\begin{equation}\label{eq:L0-factorization}
\mathfrak q_0[p]=\int_\R s^2\left[\left(\frac{p}{s}\right)'\right]^{2}\mathrm{d} z\geq0
,\quad p\in C_0^\infty(\R).
\end{equation}
Consequently $\Lambda(0)=0$.  The positive solution $p=s$ of
$\mathscr L_0p=0$ satisfies
\[
s\in L^\infty(\R)\backslash L^2(\R),\quad
\lim_{z\to-\infty}s(z)=0,\quad
\lim_{z\to+\infty}s(z)=\frac{1}{2}.
\]
Since $0=\inf\sigma_{\ess}(\mathscr L_0)$ by
\eqref{eq:ess-spectrum}, $s$ is a zero-energy threshold resonance and not
an eigenfunction.
\end{proposition}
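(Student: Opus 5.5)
At $L=0$ we have $a_0=1$, and $S=T=s$ solves \eqref{eq:EL} with $\alpha=1$. The plan is to show that $s$ itself is a positive zero mode of $\mathscr L_0$ and then apply the ground-state transform from Section~\ref{sec:spectral} with $s$ in place of $T$.

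First, substituting $S=T=s$ into \eqref{eq:VL} gives
\[
V_0=1+3(s-3s)+2(s^2+3s^2)=1-6s+8s^2=(1-2s)(1-4s).
\]
By \eqref{eq:planar-L0-bvp}, $s''=V_0 s$, so $\mathscr L_0 s=0$. Alternatively, this follows from Lemma~\ref{lem:rotation} with $L=0$, since $\mathscr L_0(2s)=0$. The formula \eqref{eq:correct-planar-L0} shows $0<s<\frac12$ on $\R$. Hence, for $p\in C_0^\infty(\R)$, the quotient $\eta:=p/s$ lies in $C_0^\infty(\R)$ and we may write $p=s\eta$.

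Next we expand:
\[
(p')^2+V_0p^2=s^2(\eta')^2+2ss'\eta\eta'+(s')^2\eta^2+ss''\eta^2.
\]
The middle term is $\frac12(s^2)'(\eta^2)'$. Integrating it by parts, with no boundary terms because $\eta$ has compact support, gives $-\int\frac12(s^2)''\eta^2=-\int((s')^2+ss'')\eta^2$. This cancels the last two terms exactly and yields \eqref{eq:L0-factorization}. The computation is the same as the one leading to \eqref{ground-state}, with $L=0$ and $T-S\equiv0$.

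For the consequence $\Lambda(0)=0$, the factorization gives $\mathfrak q_0\geq0$ on $C_0^\infty(\R)$. This class is dense in $H^1(\R)$, and $\mathfrak q_0$ is continuous on $H^1(\R)$ because $V_0\in L^\infty(\R)$, so $\Lambda(0)\geq0$. For the reverse inequality, \eqref{eq:ess-spectrum} gives $0\in\sigma_{\ess}(\mathscr L_0)$, hence $\Lambda(0)=\inf\sigma(\mathscr L_0)\leq0$. Weyl sequences supported far to the right, as in Section~\ref{sec:spectral}, also show this directly, since $V_0\to0$ at $+\infty$.

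The properties of $s$ are read off from \eqref{eq:correct-planar-L0}. It is bounded, it has the stated limits, and it is not in $L^2(\R)$ because $s\to\frac12$. It remains to check that $s$ is not an eigenfunction. Any $L^2$ solution of $\mathscr L_0p=0$ would have to be a multiple of $s$, and $s\notin L^2(\R)$. To see the first claim, note that the Wronskian of two solutions is constant. Near $-\infty$, $V_0\to1$, so a second solution independent of $s$ grows like $e^{-z}$ as $z\to-\infty$, while $s\sim e^{z}/4$ there. An $L^2$ solution must therefore decay at $-\infty$, which makes it proportional to $s$.

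The main obstacle is essentially nonexistent. The only care needed is justifying the division by $s$, which uses $s>0$, and the approximation argument extending nonnegativity from $C_0^\infty(\R)$ to all of $H^1(\R)$.
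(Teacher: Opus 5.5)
Your proof is correct and follows the paper's route. Both use the identity $V_0=1-6s+8s^2=s''/s$, the ground-state substitution $p=s\eta$ with the cross term integrated by parts to get \eqref{eq:L0-factorization}, and density of $C_0^\infty(\R)$ in $H^1(\R)$ for $\Lambda(0)\geq0$. The differences are minor. You get $\Lambda(0)\leq0$ from $0\in\sigma_{\ess}(\mathscr L_0)$, whereas the paper uses explicit trial functions $s\chi_R$ whose Rayleigh quotient is $O(R^{-2})$ by the factorization. You also add a reduction-of-order argument showing that $0$ is not an eigenvalue at all. One harmless slip there: $s\sim\frac12 e^{z-z_0}$ as $z\to-\infty$, not $e^z/4$.
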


\begin{proof}
For $S=T=s$, equations \eqref{eq:correct-planar-L0} and \eqref{eq:VL} give
\[
V_0=1-6s+8s^2=\frac{s''}{s}.
\]
Therefore, \eqref{eq:critical-operator} yields
\[
\mathscr L_0s=-s''+V_0s=0.
\]
Since $a_0=1$, substituting $p=s\eta$ and integrating the cross term by
parts in \eqref{eq:critical-form} gives
\[
\int_\R\left((p')^2+\frac{s''}{s}p^2\right)\mathrm{d} z
=\int_\R s^2(\eta')^2\mathrm{d} z,
\]
which is \eqref{eq:L0-factorization}.  Density of
$C_0^\infty(\R)$ in $H^1(\R)$ gives $\Lambda(0)\geq0$ by
\eqref{eq:Lambda}.  Conversely, choose $\chi_R\in C_0^\infty(\R)$ such that
\[
0\leq\chi_R\leq1,\quad \chi_R=1\ \text{on }[-R,R],\quad
\chi_R=0\ \text{outside }[-2R,2R],\quad
|\chi_R'|\leq\frac{C}{R}.
\]
Since $s(z)\to\frac{1}{2}$ as $z\to+\infty$, \eqref{eq:L0-factorization}
gives
\[
\mathfrak q_0[s\chi_R]=\int_\R s^2(\chi_R')^2\mathrm{d} z
\leq\frac{C}{R},\quad
\|s\chi_R\|_{L^2}^2\geq cR.
\]
Hence
\[
0\leq\Lambda(0)
\leq\frac{\mathfrak q_0[s\chi_R]}{\|s\chi_R\|_{L^2}^2}
\leq\frac{C}{R^2}\to0,\quad R\to+\infty.
\]
This proves $\Lambda(0)=0$.  Finally,
\eqref{eq:correct-planar-L0} gives $s\in L^\infty(\R)\backslash L^2(\R)$,
and \eqref{eq:ess-spectrum} gives
$0=\inf\sigma_{\ess}(\mathscr L_0)$.  Thus $s$ has all the properties
stated in Proposition~\ref{prop:zero-resonance}.
\end{proof}

Thus the negative-$L$ instability reaches the endpoint $L=0$ from below.
There is no positive spectral gap at the endpoint, and perturbation theory near
$L=0$ is a resonance problem rather than ordinary isolated-eigenvalue
perturbation theory.

\section*{Acknowledgments} 

This work was partially supported by the National Key R$\&$D Program of China under Grant 2023YFA1008801.

\end{document}